\newcommand{\defin}[1]{\emph{\textbf{#1}}}
\newcommand{\N}{\mathbb{N}}
\renewcommand{\P}{\mathcal{P}}
\newcommand{\Q}{\mathbb{Q}}
\newcommand{\R}{\mathbb{R}}
\renewcommand{\S}{\mathcal{S}}
\newcommand{\dom}{\text{dom}}
\newcommand{\Baire}{\mathcal{N}}
\newcommand{\res}[1]{\mathop{\upharpoonright_{#1}}}
\newcommand{\Po}{\P(\omega)}
\newcommand{\fc}{\sqsubseteq}
\newcommand{\fs}{\N^*}
\author{Mathieu Hoyrup\inst{1}, Crist\'obal Rojas\thanks{This author was supported by Marie Curie RISE project CID}\inst{2},
Victor  Selivanov\thanks{This author was supported by Inria program Invited Researcher, and the Regional Mathematical Center of Kazan Federal University (project 1.13556.2019/13.1 of the Ministry of Education and Science of Russian Federation)}\inst{3}, Donald M. Stull\thanks{This author was supported by Inria post-doc program}\inst{1}}
\authorrunning{M. Hoyrup, C. Rojas, V. Selivanov, D. Stull}
\institute{Universit\'e de Lorraine, CNRS, Inria, LORIA, F 54000 Nancy, France\\
{\tt mathieu.hoyrup@inria.fr, donald.stull@inria.fr}
\and
Universidad Andres Bello, Santiago, Chile\\
{\tt crojas@mat-unab.cl}
\and
A.P. Ershov
Institute of Informatics
Systems, Novosibirsk\\
and Kazan Federal University, Russia\\
   {\tt vseliv@iis.nsk.su}}
\title{Computability on quasi-Polish spaces\thanks{This project has received funding from the European Union’s Horizon 2020 research and innovation programme under the Marie Skłodowska-Curie grant agreement No 731143 \protect \includegraphics[height=3mm]{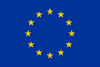}}}
\begin{document}
\maketitle

\begin{abstract}
We investigate the effectivizations of several equivalent definitions of quasi-Polish spaces and study which characterizations hold effectively. Being a computable effectively open image of the Baire space is a robust notion that admits several characterizations. We show that some natural effectivizations of quasi-metric spaces are strictly stronger.
\end{abstract}

\section{Introduction}\label{in}

Classical  descriptive set theory (DST) \cite{ke95} deals with
hierarchies of sets, functions and equivalence relations in Polish
spaces.  Theoretical Computer Science, in particular Computable
Analysis \cite{wei00}, motivated an extension of the classical DST
to non-Hausdorff spaces; a noticeable progress was achieved for the
$\omega$-continuous domains and quasi-Polish spaces \cite{s06,br}. The theory of quasi-Polish spaces is already a well-established part of classical DST \cite{br,chen}

Theoretical  Computer Science and Computable Analysis   especially
need an effective  DST for some effective versions of the mentioned
classes of topological spaces. A lot of useful work in this
direction was done in Computability Theory but only
for the discrete space $\N$, the Baire space $\mathcal{N}$, and
some of their  relatives \cite{ro67,sg09}. For a systematic work to
develop the effective DST for effective Polish spaces see e.g.
\cite{mo09,lo78,gr15}. There was also some  work on the effective
DST for effective domains and approximation spaces
\cite{s06,s08,bg12,s15}.

In this paper we continue the search of a ``correct'' version of a computable quasi-Polish space initiated in \cite{s15,kk17}. By a correct version we mean one having properties similar to effective versions of those in the classical case: the computable quasi-Polish spaces have to subsume the well established classes of computable Polish spaces and computable $\omega$-continuous domains and to admit a good enough effective DST.

We identify effective versions of quasi-Polish spaces satisfying these specifications. One of them is the class of computable effectively open images of the Baire space identified and studied in \cite{s15}. We provide some characterizations of this class which are effective versions of the corresponding  known characterizations of quasi-Polish spaces in \cite{br}. However we show that some natural effectivizations of complete quasi-metric spaces are strictly stronger.

The results of this paper were obtained in September 2018 during a research stay of the second and third authors in Inria, Nancy. On the final stage of preparing this paper the preprint \cite{br1} appeared where some of our results where obtained independently (using a slightly different approach and terminology), notably Definition \ref{def} and Theorem \ref{thm_char}. 

In order to make our discussion of effective spaces closer to the corresponding classical theory, we use an approach based on the canonical embeddings of cb$_0$-spaces into the Scott domain $\Po$ and on the computability in this domain. This approach (which emphasizes the notion of effective continuity rather than the equivalent notion of computability w.r.t. admissible representations more popular in Computable Analysis) was promoted in \cite{kk08,s08}.

We start  in the next section with recalling definitions of some notions  of effective spaces and of effective DST in such spaces; we also try to simplify and unify rather chaotic terminology in this field. In Sections \ref{prel} and \ref{sec_pi2} we establish the main technical tools used in the sequel. In Section \ref{sec_qp}  we propose a definition of effective quasi-Polish spaces and prove characterizations of this notion. In Section \ref{sec_qm} we propose two effective notions of quasi-metric space and prove that they differ from the notion of effective quasi-Polish space. 

\section{Preliminaries}\label{prel}

Here we recall some known notions and facts, with a couple of new observations. 

Notions similar to those considered below were
introduced (sometimes independently) and studied in \cite{gsw07}, \cite{kk08}, \cite{s08}
under different names. We  use a slightly different terminology, trying to simplify it and make it closer to that of classical topology. Note that the terminology in   effective topology  is still far from being fixed.
All topological spaces considered in this paper are assumed to be countably based. Such a space satisfying the $T_0$-separation axiom is sometimes called a cb$_0$-space, for short. We recall that~$(W_e)_{e\in\N}$ is some effective enumeration of the computably enumerable (c.e.) subsets of~$\N$.

\begin{definition}
An \defin{effective topological space} is a countably-based $T_0$ topological space coming with a numbering~$(B^X_i)_{i\in\N}$ of a basis, such that there is a computable function~$f:\N^2\to\N$ such that~$B^X_i \cap B^X_j=\bigcup_{k\in W_{f(i,j)}}B^X_k$.
\end{definition}

Many popular spaces (e.g., the discrete space $\N$ of naturals, the space of reals ${\mathbb R}$, the domain $\Po$, the Baire space $\mathcal{N}=\N^\N$, the Cantor space  $\mathcal{C}=2^\N$ and the Baire domain $\N^{\leq\N}=\N^*\cup\N^\N$ of finite and infinite sequences of naturals) are effective topological spaces in an obvious way. The effective topological space $\N$ is trivial  topologically but very interesting for Computability Theory. We use some almost standard notation related to the Baire space. In particular,  $[\sigma]$ denotes the basic open set induced by $\sigma\in\N^*$ consisting of all $p\in\mathcal{N}$ having $\sigma$ as an initial segment; we sometimes call such sets cylinders. Let $\varepsilon$ denote the empty string in $\N^*$.  

In \cite{s82,s08} the effective Borel and effective Hausdorff hierarchies in arbitrary effective topological spaces $X$ were introduced. Also the effective Luzin hierarchy is defined naturally \cite{s15}. Below we use the simplified notation for levels of these hierarchies like $\Pi^0_n(X)$, $\Sigma^1_n(X)$ or $\Sigma^{-1}_n(X)$ (which naturally generalizes the notation in computability theory) and some of their obvious properties. We will also use the expression \emph{effective open set} for sets in the class~$\Sigma^0_1(X)$, which are the sets~$\bigcup_{i\in W}B^X_i$ for some c.e.~set~$W\subseteq\N$.
\begin{definition}
If~$X,Y$ are effective topological spaces then a function~$f:X\to Y$ is \defin{computable} if the sets~$f^{-1}(B^Y_i)$ are uniformly effective open sets.
\end{definition}
As observed in \cite{s15}, for any  effective topological space  $X$, the equality
relation~$=_X$ on $X$ is in $\Pi^0_2(X\times X)$. The argument in \cite{s15} shows that also the specialization partial order $\leq_X$ has the same descriptive complexity. In particular, every singleton is in the boldface class~$\boldsymbol{\Pi}^0_2(X)$.

With any effective topological space $X$ we associate the {\em canonical embedding}~$e:X\to \Po$ defined by $e(x)=\{n\mid x\in B^X_n\}$ (in \cite{s08} the canonical embedding was denoted as $O_\xi$; we changed the notation here to make it closer to that of the paper \cite{chen} which is cited below). The canonical embedding is a computable homeomorphism between $X$ and the subspace $e[X]$ of $\Po$. It can be used to study computability on cb$_0$-spaces \cite{kk08,s08} using the fact that the computable functions on $\Po$ coincide with the enumeration operators \cite{ro67}.

The more popular and general approach to computability on topological spaces is based on representations \cite{wei00}. The relation between the two approaches is based on the so called enumeration representation~$\rho:\mathcal{N}\to \Po$ defined by $\rho(x)=\{n\mid\exists i(x(i)=n+1)\}$. The function $\rho$ is a computable effectively open surjection. The canonical embedding $e$ induces the \defin{standard representation}~$\rho_X=e^{-1}\circ\rho_A$ of $X$ where $A=e(X)$ and $\rho_A$ is the restriction of $\rho$ to~$\rho^{-1}(A)$. The function $\rho_X$ is a computable effectively open surjection. We will implicitly identify any effective topological space~$X$ with its image under the canonical embedding, so that~$X$ is a subspace of~$\Po$, and~$\rho_X$ is the restriction of~$\rho$ to~$\rho^{-1}(X)$.

Note that for effective topological spaces~$X$ and~$Y$, $f:X\to Y$ is computable iff there exists a computable function~$F:\dom(\rho_X)\to\dom(\rho_Y)$ such that~$\rho_Y\circ F=f\circ \rho_X$.

\section{Results on~$\Pi^0_2$-sets}\label{sec_pi2}
This section contains the technical tools that will be used to prove the characterizations of effective quasi-Polish spaces.

\begin{definition}
Let~$X$ be an effective topological space. We say that~$A\subseteq X$ is \defin{computably overt} if the set~$\{i\in\N:B^X_i\cap A\neq\emptyset\}$ is c.e.
\end{definition}

Observe that the overt information does not uniquely determine the set, but only its closure. In the literature, overt and computably overt sets are often assumed to be closed. It is important to note that in this paper, no such assumption is made.
 
We recall that if~$X$ is an effective topological space then a set is in~$\Pi^0_2(X)$ if it is an intersection of Boolean combinations of uniformly effective open subsets of~$X$. We prove an effective version of Theorem 68 in \cite{br}.

\begin{lemma}\label{lem_pi2}
Let~$X$ be an effective topological space. For~$A\subseteq X$,
\begin{itemize}
\item $A\in \Pi^0_2(X)$ iff~$\rho_X^{-1}(A)\in\Pi^0_2(\dom(\rho_X))$,
\item $A$ is computably overt iff $\rho_X^{-1}(A)$ is computably overt.
\end{itemize}
\end{lemma}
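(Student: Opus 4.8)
The plan is to prove both equivalences by transferring properties back and forth between $X$ (viewed as a subspace of $\Po$) and its preimage under the standard representation $\rho_X$, which is a computable effectively open surjection. The key structural fact I want to exploit is that $\rho_X \colon \dom(\rho_X) \to X$ is computable, effectively open, and surjective; computability lets me pull back open (and hence $\Pi^0_2$) information along $\rho_X$, while effective openness lets me push open information forward. I will treat the two bullets separately, but both rest on the same toolkit.

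\medskip

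\noindent\textbf{The $\Pi^0_2$ equivalence.}
For the forward direction, suppose $A \in \Pi^0_2(X)$, so $A = \bigcap_n C_n$ where each $C_n$ is a (uniformly) Boolean combination of effective open subsets of $X$. Since $\rho_X$ is computable, the preimage $\rho_X^{-1}(U)$ of any (uniformly) effective open $U \subseteq X$ is effective open in $\dom(\rho_X)$, and preimages commute with Boolean operations and with countable intersection. Hence $\rho_X^{-1}(A) = \bigcap_n \rho_X^{-1}(C_n)$ exhibits $\rho_X^{-1}(A)$ as a $\Pi^0_2$ subset of $\dom(\rho_X)$, uniformly. For the converse I expect the effective openness of $\rho_X$ to carry the load. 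Given that $\rho_X^{-1}(A) \in \Pi^0_2(\dom(\rho_X))$, I would write $\rho_X^{-1}(A) = \bigcap_n V_n$ with $V_n$ uniformly Boolean combinations of effective open sets of $\dom(\rho_X)$, and then use surjectivity together with the effectively open image property to descend each piece to $X$. The natural move is: because $\rho_X$ is an effectively open surjection, the image $\rho_X[W]$ of an effective open $W$ is effective open in $X$, and more importantly a point $x$ lies in $A$ iff \emph{every} preimage (equivalently, because $A$ is saturated under $\rho_X$, some/every point of the fiber) lies in $\rho_X^{-1}(A)$. The main obstacle is that pushing forward a countable \emph{intersection} is not immediate — images do not commute with intersections — so I would instead recast $\rho_X^{-1}(A)$ in the canonical $\Pi^0_2$ normal form (an intersection of differences of open sets, or equivalently using that $\Pi^0_2$ sets are exactly the sets whose membership is semi-decidable relative to a $\Sigma^0_2$ complement) and verify that the resulting defining condition, quantified over the fiber $\rho_X^{-1}(x)$, translates into a $\Pi^0_2$ condition on $x$ using effective openness uniformly.

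\medskip

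\noindent\textbf{The computable overtness equivalence.}
Here I want to show $\{i : B^X_i \cap A \neq \emptyset\}$ is c.e.\ iff $\{j : B^{\dom(\rho_X)}_j \cap \rho_X^{-1}(A) \neq \emptyset\}$ is c.e. The clean direction uses effective openness directly: for a basic open $B^X_i$ of $X$, we have $B^X_i \cap A \neq \emptyset$ iff $\rho_X^{-1}(B^X_i) \cap \rho_X^{-1}(A) \neq \emptyset$ (by surjectivity), and since $\rho_X^{-1}(B^X_i)$ is uniformly effective open in $\dom(\rho_X)$ it is a c.e.\ union of basic open sets there; so $A$ overt follows from $\rho_X^{-1}(A)$ overt by checking each basic piece, and conversely I can recover overtness of $A$ by intersecting with the images. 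For the other direction I would use that $\rho_X$ is effectively open: a basic open $B^{\dom(\rho_X)}_j$ has effective open image $\rho_X[B^{\dom(\rho_X)}_j]$, and $B^{\dom(\rho_X)}_j \cap \rho_X^{-1}(A) \neq \emptyset$ iff $\rho_X[B^{\dom(\rho_X)}_j] \cap A \neq \emptyset$. Writing the image as a c.e.\ union $\bigcup_{k \in W} B^X_k$ and using the assumed c.e.\ overtness predicate for $A$, this condition becomes a c.e.\ predicate in $j$.

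\medskip

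\noindent Overall I expect the computable overtness equivalence to be the easy half, essentially a direct calculation from surjectivity and effective openness. The genuine difficulty is the converse direction of the $\Pi^0_2$ equivalence: descending a $\Pi^0_2$ property through the surjection requires handling the universal quantifier over fibers without destroying the $\Pi^0_2$ complexity, and this is where I would lean most heavily on the precise uniform effective-openness of $\rho_X$ and, if needed, on the earlier observation that $=_X$ and the fibers are themselves $\Pi^0_2$-definable.
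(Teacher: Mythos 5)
Your forward direction of the $\Pi^0_2$ equivalence and your treatment of computable overtness are correct and match the paper's proof: pulling back along the computable map $\rho_X$ handles one side, and the equivalence $B^X_i\cap A\neq\emptyset \iff \rho_X^{-1}(B^X_i)\cap\rho_X^{-1}(A)\neq\emptyset$ together with effective openness handles overtness in both directions. The gap is in the converse of the $\Pi^0_2$ equivalence, which you correctly flag as the hard part but then only sketch as a hope: you say you would ``verify that the resulting defining condition, quantified over the fiber $\rho_X^{-1}(x)$, translates into a $\Pi^0_2$ condition on $x$,'' but no such direct translation exists, and this is precisely where a new idea is required. If you quantify existentially over the fiber, you are pushing an image through a countable intersection, which fails; if you quantify universally, you need the condition ``$\rho_X^{-1}(x)\subseteq U$'' for $U$ open, which is not an open (or even Boolean-combination-of-open) condition on $x$, since the fibers are merely $\boldsymbol{\Pi}^0_2$ and in particular not compact.

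The paper's resolution has two ingredients you are missing. First, it dualizes: it proves that $\rho_X^{-1}(A)\in\Sigma^0_2(\dom(\rho_X))$ implies $A\in\Sigma^0_2(X)$, writing $\rho_X^{-1}(A)=\dom(\rho_X)\cap\bigcup_n U_n\setminus V_n$, which is equivalent to the $\Pi^0_2$ statement by complementation. Second, and crucially, it applies a Baire category argument \emph{inside each fiber}: since $\rho_X^{-1}(x)$ is $\boldsymbol{\Pi}^0_2$ in $\Baire$, it is quasi-Polish and hence a Baire space, so from $\rho_X^{-1}(x)\subseteq\bigcup_n U_n\setminus V_n$ one gets an $n$ and a cylinder $[\sigma]$ with $\emptyset\neq\rho_X^{-1}(x)\cap[\sigma]\subseteq U_n\setminus V_n$. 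This localization is exactly what makes images of differences behave: it yields
\begin{equation*}
A=\bigcup_{\sigma,n}\rho_X([\sigma]\cap U_n)\setminus\rho_X([\sigma]\cap V_n),
\end{equation*}
where the right-hand side is $\Sigma^0_2(X)$ by effective openness (the inclusion of $A$ in the right-hand side is the Baire category step, and the reverse inclusion is a direct check using that the fiber piece $\rho_X^{-1}(x)\cap[\sigma]$ avoids $V_n$). Without this non-effective topological input --- fibers are Baire spaces --- your strategy of ``recasting in normal form and translating the fiber quantifier'' cannot be completed, so the proposal as written does not constitute a proof of the harder direction.
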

\begin{proof}
If~$A\in\Pi^0_2(X)$ then one easily obtains~$\rho_X^{-1}(A)\in\Pi^0_2(\dom(\rho_X))$. We now prove that if~$\rho_X^{-1}(A)\in \Sigma^0_2(\dom(\rho_X))$ then~$A\in\Sigma^0_2(X)$, which implies the same result for the class~$\Pi^0_2$. Let~$\rho_X^{-1}(A)=\dom(\rho_X)\cap\bigcup_n U_n\setminus V_n$ where~$U_n,V_n\in\Sigma^0_1(\Baire)$ uniformly and~$V_n\subseteq U_n$. Then the set
\begin{equation*}
A':=\bigcup_{\sigma,n}\rho_X([\sigma]\cap U_n)\setminus \rho_X([\sigma]\cap V_n)
\end{equation*}
belongs to~$\Sigma^0_2(X)$, because the image of a~$\Sigma^0_1(\Baire)$-set is a~$\Sigma^0_1(X)$-set uniformly. We show that~$A=A'$. The inclusion~$A'\subseteq A$ is straightforward. For the other inclusion, let~$x\in A$. One has~$\rho_X^{-1}(x)\in\boldsymbol{\Pi}^0_2(\Baire)$ so~$\rho_X^{-1}(x)$ is quasi-Polish so it is a Baire space (\cite{br}). One has~$\rho_X^{-1}(x)\subseteq\bigcup_n U_n\setminus V_n$. By Baire category, there exists~$n$ such that~$\rho_X^{-1}(x)\cap U_n\setminus V_n$ is somewhere dense in~$\rho_X^{-1}(x)$, i.e.~there exists~$\sigma\in\N^*$ such that~$\emptyset\neq \rho_X^{-1}(x)\cap [\sigma]\subseteq U_n\setminus V_n$. As a result,~$x\in \rho_X([\sigma]\cap U_n)\setminus \rho_X([\sigma]\cap V_n)\subseteq A'$.

If~$A$ is computably overt then~$[\sigma]\cap \rho_X^{-1}(A)\neq\emptyset$ iff~$\rho_X([\sigma])\cap A\neq\emptyset$ which is c.e.~as~$\rho_X([\sigma])\in\Sigma^0_1(X)$, uniformly in~$\sigma$.

If~$\rho_X^{-1}(A)$ is computably overt then~$[\sigma]\cap A\neq\emptyset$ iff~$\rho_X^{-1}([\sigma])\cap \rho_X^{-1}(A)\neq\emptyset$ is a c.e.~relation as~$\rho_X^{-1}([\sigma])\in\Sigma^0_1(X)$, uniformly in~$i$.
\end{proof}

An important property of computably overt~$\Pi^0_2$-sets is that they contain computable points. It is a crucial ingredient in the next results.
\begin{proposition}[\cite{ho17}]\label{prop_comp_pi2}
In a computable Polish space, a~$\Pi^0_2$-set is computably overt if and only if it contains a dense computable sequence.
\end{proposition}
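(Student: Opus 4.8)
The plan is to prove both implications, treating the direction ``density~$\Rightarrow$~overtness'' as routine and concentrating on the converse. Throughout I fix a compatible computable metric~$d$ on the computable Polish space~$X$ together with a dense computable sequence~$(q_k)_{k\in\N}$, so that the rational balls~$B(q_k,r)$ form the standard basis, and I write~$A=\bigcap_n U_n$ with~$(U_n)$ a uniformly effective sequence of open sets, which is legitimate since in a computable metric space every~$\Pi^0_2$-set is an effective~$G_\delta$.

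For the easy direction, suppose~$A$ contains a dense computable sequence~$(x_n)\subseteq A$. For any basic ball~$B$ one has~$B\cap A\neq\emptyset$ iff~$B\cap\overline{A}\neq\emptyset$ (as~$B$ is open) iff~$B$ contains some~$x_n$ (by density of~$(x_n)$ in~$\overline{A}$). Since membership~$x_n\in B$ of a computable point in a basic open set is a~$\Sigma^0_1$ condition, uniformly in~$n$ and in the index of~$B$, the set~$\{i:B^X_i\cap A\neq\emptyset\}$ is c.e., so~$A$ is computably overt.

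For the converse, the key idea is that overtness supplies exactly the semidecidable predicate needed to drive an effective shrinking-ball construction inside the~$G_\delta$ set~$A$. I work with formal balls~$(q_k,r)$ ($r\in\Q^{>0}$) and the formal inclusion~$(q_k,r)\Subset(q_j,r')\iff d(q_k,q_j)+r<r'$, which is decidable and guarantees~$\overline{B}(q_k,r)\subseteq B(q_j,r')$. Starting from any basic ball~$B_0$ meeting~$A$, I build a nested sequence of formal balls~$(c_s,r_s)$ with~$r_s<2^{-s}$, such that~$(c_{s+1},r_{s+1})\Subset(c_s,r_s)$, such that~$(c_{s+1},r_{s+1})\Subset\beta$ for some basic ball~$\beta$ occurring in the effective union~$U_s=\bigcup\beta$ (whence~$\overline{B}(c_{s+1},r_{s+1})\subseteq U_s$), and such that~$B(c_s,r_s)\cap A\neq\emptyset$. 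The last condition is semidecidable by computable overtness, and ``$\Subset$ into a basic piece of~$U_s$'' is c.e.; so at each stage I can dovetail over candidate formal balls of radius~$<2^{-(s+1)}$ and take the first one satisfying all the (finitely many) c.e.~constraints. The search always succeeds: given~$B(c_s,r_s)\cap A\neq\emptyset$, pick~$a\in A\cap B(c_s,r_s)$; since~$A\subseteq U_s$, the point~$a$ lies in some basic piece~$\beta$ of~$U_s$, and~$B(c_s,r_s)\cap\beta$ is an open neighbourhood of~$a$, hence contains a formal ball of radius~$<2^{-(s+1)}$ centred at some~$q_k$ with~$a\in B(q_k,r)$, $(q_k,r)\Subset(c_s,r_s)$ and~$(q_k,r)\Subset\beta$; this ball meets~$A$ because it contains~$a$. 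The centres~$(c_s)$ then form an effective Cauchy sequence with modulus~$2^{-s}$, so~$x=\lim_s c_s$ is a computable point; it lies in~$\overline{B}(c_{s+1},r_{s+1})\subseteq U_s$ for every~$s$, hence in~$\bigcap_nU_n=A$, and also in~$B_0$.

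Finally, to obtain density and uniformity I run this construction uniformly along the c.e.~enumeration~$i_0,i_1,\dots$ of~$\{i:B^X_i\cap A\neq\emptyset\}$ furnished by overtness, starting the~$n$-th run inside~$B^X_{i_n}$. This yields a uniformly computable sequence~$(x_n)$ with~$x_n\in A\cap B^X_{i_n}$, which is dense in~$\overline{A}$ since every basic ball meeting~$A$ occurs as some~$B^X_{i_n}$ and hence contains~$x_n$. The main obstacle is precisely this converse direction: arranging the construction so that the invariant ``current ball meets~$A$'' is preserved while simultaneously forcing the limit into each~$U_s$, which is exactly where computable overtness and completeness of~$X$ are used jointly.
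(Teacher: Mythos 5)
The paper does not actually prove this proposition --- it is imported from \cite{ho17} --- so there is no internal proof to compare against; judged on its own, your argument is correct and is essentially the standard proof of this result. The easy direction is fine, and in the converse the shrinking-ball construction correctly dovetails the three c.e.\ constraints (formal inclusion into the current ball, formal inclusion into some basic piece of~$U_s$, and ``meets~$A$'', the last supplied by computable overtness), maintains the invariant that the current ball meets~$A$ (so each search terminates), and uses completeness to obtain a computable limit lying in~$\bigcap_s U_s$ and in the starting ball; running the construction along the c.e.\ enumeration of basic balls meeting~$A$ then gives the dense computable sequence, uniformly. Two small points deserve attention. First, you assert without justification that every~$\Pi^0_2$-set in a computable metric space is an effective~$G_\delta$; with this paper's definition of~$\Pi^0_2$ (countable intersections of Boolean combinations of uniformly effective open sets) this is true but needs a line: the complement of a basic ball satisfies $X\setminus B(c,r)=\bigcap_m\{x: d(c,x)>r-2^{-m}\}$, an effective intersection of uniformly effective open sets, whence the complement of any effective open set, hence any Boolean combination, hence any~$\Pi^0_2$-set, is an effective~$G_\delta$, uniformly in the given indices. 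Second, formal inclusion $d(q_k,q_j)+r<r'$ is only c.e., not decidable, since the distances between points of the dense sequence are computable reals and strict inequality is merely semidecidable; this slip is harmless because your dovetailing, as you in fact describe it, only requires the constraints to be c.e.
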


Moreover, the next result shows that in a computably overt~$\Pi^0_2$-set, not only can one find an effective indexing over~$\N$ of a dense set of elements, but one can even find an effective indexing over~$\Baire$ of \emph{all} its elements.

\begin{lemma}\label{lem_pi2_baire}
Let~$A\subseteq\Baire$ be non-empty. The following are equivalent:
\begin{enumerate}[(i)]
\item $A$ is a computably overt $\Pi^0_2$-set,
\item There exists a computable effectively open surjective map~$f:\Baire\to A$.
\end{enumerate}
\end{lemma}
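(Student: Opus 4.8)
The plan is to prove the two implications separately. I expect (i)$\Rightarrow$(ii) to be the real construction, and (ii)$\Rightarrow$(i) to split into a routine overtness check and a more delicate $\Pi^0_2$ argument.

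For (i)$\Rightarrow$(ii) the idea is to isolate all the $\Pi^0_2$-content of $A$ inside a \emph{closed} set of ``names'' and then walk through it. The first step is to put $A$ in the form $A=\bigcap_n U_n$ with $U_n=\bigcup_k[\nu_{n,k}]$ uniformly effective open, a standard normal form available because $A$ is a computably overt $\Pi^0_2$ set (overtness already makes $\overline{A}$ an effective $G_\delta$). Identifying $\Baire$ with $\Baire\times\Baire$ by a computable pairing, I set $N:=\{(x,k):\forall n\ \nu_{n,k(n)}\sqsubseteq x\}$. Each clause $\nu_{n,k(n)}\sqsubseteq x$ is clopen, so $N$ is a nonempty $\Pi^0_1$ set, and the first projection satisfies $\pi_1(N)=\{x:\forall n\,\exists k\ \nu_{n,k}\sqsubseteq x\}=\bigcap_nU_n=A$. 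Moreover $\pi_1\colon N\to A$ is computable and effectively open: a basic open piece of $N$ fixes a prefix $\sigma\sqsubseteq x$ and finitely many witnesses $k(n_1),\dots,k(n_j)$, and its image is $A\cap[\sigma]\cap\bigcap_i[\nu_{n_i,k(n_i)}]$, the remaining witnesses being freely completable because each point of $A$ lies in every $U_n$.

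It then suffices to build a computable effectively open surjection $g\colon\Baire\to N$, since $f:=\pi_1\circ g$ is then as required. Here I would first observe that $N$ is itself computably overt: a basic open of $\Baire$ meets $N$ iff $A$ meets the effective open set cut out by the finitely many prefix constraints, which is c.e.\ by overtness of $A$. Consequently the prefix tree $T_N=\{\mu:[\mu]\cap N\neq\emptyset\}$ is a c.e.\ \emph{pruned} tree with $[T_N]=N$, and $g$ can be taken to be the standard guided tree-walk: reading $p\in\Baire$ to select, at each node, among the (enumerable and always nonempty) children of $T_N$. Such a $g$ is total, computable, surjective onto $[T_N]=N$, and effectively open because the choices it makes are free. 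The payoff of routing through the closed set $N$ is exactly that it dissolves the central tension of this direction, namely the conflict between $f$ being \emph{total} on all of $\Baire$ and its range being \emph{exactly} $A$ rather than $\overline{A}$: every branch of the pruned tree $T_N$ is automatically a legal name, so no point outside $A$ can be produced.

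For (ii)$\Rightarrow$(i), overtness is immediate: since $f$ is onto $A$, one has $B^{\Baire}_i\cap A\neq\emptyset$ iff $f^{-1}(B^{\Baire}_i)\neq\emptyset$, and $f^{-1}(B^{\Baire}_i)$ is effectively open in $\Baire$ by computability of $f$, so its nonemptiness is c.e.\ uniformly in $i$. That $A\in\Pi^0_2(\Baire)$ is the harder half. Writing $f([\sigma])=A\cap W_\sigma$ with $W_\sigma$ uniformly effective open and $F\colon\N^*\to\N^*$ the monotone machine computing $f$, the natural guess is that $x\in A$ iff $\forall m\,\exists\sigma\,(|\sigma|\ge m\wedge F(\sigma)\sqsubseteq x\wedge x\in W_\sigma)$, which is $\Pi^0_2$; the forward inclusion is clear by taking a preimage of $x$. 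The difficulty, which I expect to be the main obstacle of this direction, is the converse: one must convert a mere supply of witnesses into an honest preimage branch, and because $\Baire$ is not compact a naive K\"onig-style extraction fails. This is precisely the point where effective openness (through $f([\sigma])=\bigcup_a f([\sigma a])$) and the completeness of $\Baire$ must be used, in the same spirit as the Baire-category step in the proof of Lemma~\ref{lem_pi2}, each fibre $f^{-1}(x)$ being closed and hence a Baire space.
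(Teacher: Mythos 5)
Your direction (i)$\Rightarrow$(ii) is correct and, despite the detour through the closed set $N$ of point--witness pairs, is essentially the paper's own construction: the paper builds the nested cylinders $u_\sigma$ directly inside the $\Pi^0_2$ set (requiring $[u_\sigma]\subseteq A_n$, $[u_\sigma]\cap A\neq\emptyset$, and that the children cover $[u_\sigma]\cap A$), which is exactly your guided pruned-tree walk carried out on $A$ itself; computable overtness enters both arguments at the same place, namely in enumerating the admissible children of a node. Your projection claim (images of basic open pieces of $N$ have the form $A\cap(\text{effective open})$) is also correct.

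The genuine gap is in (ii)$\Rightarrow$(i), and it is more serious than an unfinished verification: the $\Pi^0_2$ formula you propose is \emph{false} in general, because your witnesses $\sigma$ for different $m$ need not be compatible and the extension of the formula depends on the chosen family $(W_\sigma)$. Concretely, let $A=\Baire\setminus\{0^\omega\}$, where $0^\omega$ is the zero sequence (this $A$ is open, hence a computably overt $\Pi^0_2$ set, so the lemma does hold for it). One can build a computable effectively open surjection $f:\Baire\to A$, together with a legitimate uniformly effective family $(W_\sigma)$, such that for each $m$ the string $\sigma_m$ consisting of the symbol $m$ followed by $m$ zeros satisfies $f([\sigma_m])=A\cap[0^m]$ and $W_{\sigma_m}=[0^m]$: on $[\sigma_m]$ let $f$ act as the standard cylinder-to-cylinder surjection of $\Baire$ onto $A\cap[0^m]=\bigcup_{k\geq m,a\geq 1}[0^ka]$, and elsewhere on the cylinder of $m$ let $f$ output points beginning with the symbol $1$. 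Now take $x=0^\omega$: for every $m$ one has $|\sigma_m|=m+1\geq m$; the monotone machine's output $F(\sigma_m)$ must be a common prefix of all points of $f([\sigma_m])=A\cap[0^m]$, hence $F(\sigma_m)\sqsubseteq 0^m\sqsubseteq x$; and $x\in[0^m]=W_{\sigma_m}$. So $x$ satisfies your formula although $x\notin A$. Note also that your intended repair cannot succeed: for $x\notin A$ the fibre $f^{-1}(x)$ is \emph{empty}, and no Baire-category argument can show an empty set is nonempty; in Lemma \ref{lem_pi2} category is applied to fibres $\rho_X^{-1}(x)$ that are nonempty because $x$ lies in the space.

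What makes the direction work --- and is the paper's proof --- is to force the witnesses to cohere into a single branch. Since $[\sigma]\cap f^{-1}(x)\neq\emptyset$ iff $x\in f([\sigma])$, which for $x\in A$ is the condition $x\in W_\sigma$, c.e.\ in $x$, one can computably search for a strictly increasing chain of cylinders meeting $f^{-1}(x)$; this defines a partial computable $g$ with $f\circ g=\id$ on $A$, the limit branch landing in $f^{-1}(x)$ because fibres of the continuous map $f$ are closed. Then $A=\{x: x\in\dom(g)\text{ and }f(g(x))=x\}$, and both conditions are $\Pi^0_2$; the final equality test is precisely what eliminates spurious points such as $0^\omega$ above, for which the search either dies out or produces a branch mapping to the wrong point.
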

When we write that~$f:\Baire\to A$ is open, we mean that for each~$\sigma\in\N^*$, there exists an open set~$U_\sigma\subseteq\Baire$ such that~$f([\sigma])=A\cap U_\sigma$. $f$ is effectively open when~$U_\sigma$ is effectively open, uniformly in~$\sigma$.
\begin{proof}
Assume (i). Let~$A=\bigcap_n A_n$ where~$A_n$ are uniformly effective open sets. We can assume w.l.o.g.~that~$A_{n+1}\subseteq A_n$.

One can build a computable sequence~$(u_\sigma)_{\sigma\in\fs}$ such that~$u_\epsilon=\epsilon$ and:
\begin{itemize}
\item If~$\tau$ properly extends~$\sigma$ then~$u_\tau$ properly extends~$u_\sigma$,
\item If~$|\sigma|=n$ then~$[u_\sigma]\subseteq A_n$,
\item $[u_\sigma]$ intersects~$A$,
\item $[u_\sigma]\cap A$ is contained in~$\bigcup_{i\in\N} [u_{\sigma\cdot i}]$.
\end{itemize}

We build this sequence inductively in~$\sigma$. Given~$u_\sigma$ intersecting~$A$ with~$|\sigma|=n$, one can compute a covering of~$[u_\sigma]\cap A_{n+1}$ with cylinders properly extending~$u_\sigma$ and extract the cylinders intersecting~$A$. Let~$(u_{\sigma\cdot i})_{i\in\N}$ be some computable enumeration of them.

We now define~$f$. For each~$p\in\Baire$, the sequence~$u_{p\res{n}}$ converges to some~$q\in\Baire$. We define~$f(p)=q$. One easily checks that the function~$f:\Baire\to A$ is computable, onto and effectively open as~$f([\sigma])=[u_\sigma]\cap A$.

Now assume (ii). The function~$f$ has a computable right-inverse, i.e.~$g:A\to\Baire$ such that~$f\circ g$ is the identity on~$A$. Indeed, given~$p\in A$, one can enumerate all the cylinders intersecting~$f^{-1}(p)$ as~$[\sigma]\cap f^{-1}(p)\neq\emptyset$ iff~$p\in f([\sigma])$ which can be recognized as~$f([\sigma])$ is effectively open. Hence one can progressively build an element of the closed set~$f^{-1}(p)$.

The function~$g$ is a partial computable function from~$\Baire\to\Baire$. Its domain is~$\Pi^0_2$ and contains~$A$. One has~$p\in A\iff p$ belongs to the domain of~$g$ and~$p=f\circ g(p)$. As a result,~$A$ is~$\Pi^0_2$. The image under~$f$ of a dense computable sequence in~$\Baire$ is a dense computable sequence in~$A$, so the set of cylinders intersecting~$A$ is c.e.
\end{proof}

This result can be extended to subsets of~$\Po$.
\begin{lemma}\label{lem_pi2_po}
Let~$A\subseteq\Po$ be non-empty. The following are equivalent:
\begin{enumerate}[(i)]
\item $A$ is a computably overt $\Pi^0_2$-set,
\item There exists a computable effectively open surjective map~$f:\Baire\to A$.
\end{enumerate}
\end{lemma}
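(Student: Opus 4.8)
The plan is to transport everything to the Baire space along the enumeration representation $\rho$ and then invoke Lemma~\ref{lem_pi2_baire}. Writing $B=\rho^{-1}(A)$ and noting that the canonical embedding of $\Po$ into itself is the identity, so that $\rho_{\Po}=\rho$ and $\dom(\rho)=\Baire$, Lemma~\ref{lem_pi2} tells us that $A$ is a computably overt $\Pi^0_2$-subset of $\Po$ iff $B$ is a computably overt $\Pi^0_2$-subset of $\Baire$. By Lemma~\ref{lem_pi2_baire} the latter is in turn equivalent to the existence of a computable effectively open surjection $\Baire\to B$. Hence the whole statement reduces to the single equivalence: there is a computable effectively open surjection $\Baire\to A$ iff there is one $\Baire\to B$.

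The easy direction is to turn a surjection onto $B$ into one onto $A$. Given $g\colon\Baire\to B$, I would set $f=\rho\circ g$. It is computable and onto $A$, because $\rho(B)=\rho(\rho^{-1}(A))=A$ (using $A\subseteq\Po=\mathrm{im}\,\rho$). For effective openness, writing $g([\sigma])=B\cap V_\sigma$ with $V_\sigma$ effectively open, one has $f([\sigma])=\rho(\rho^{-1}(A)\cap V_\sigma)=A\cap\rho(V_\sigma)$, and $\rho(V_\sigma)$ is effectively open uniformly in $\sigma$ since $\rho$ is effectively open.

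For the converse I would lift a surjection $f\colon\Baire\to A$ to one onto $B$ by passing through the pullback of $f$ and $\rho$, namely $C=\{(p,q)\in\Baire\times\Baire : f(p)=\rho(q)\}$, viewed inside $\Baire\times\Baire\cong\Baire$. First I would check that $C$ is a computably overt $\Pi^0_2$-set: it is $\Pi^0_2$ because it is the preimage of the equality relation $=_{\Po}$, which is $\Pi^0_2$ as recalled in the preliminaries, under the computable map $(p,q)\mapsto(f(p),\rho(q))$; and it is computably overt because a basic box $[\sigma]\times[\tau]$ meets $C$ iff $[\sigma]\cap f^{-1}(\rho([\tau]))\neq\emptyset$, which is c.e.\ uniformly, as $\rho([\tau])$ is effectively open in $\Po$ and $f$ is computable. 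Since $A\neq\emptyset$ and $\rho$ is onto, $C\neq\emptyset$, so Lemma~\ref{lem_pi2_baire} yields a computable effectively open surjection $\Baire\to C$. Composing it with the second projection $\pi\colon C\to\Baire$, $(p,q)\mapsto q$, gives the desired map, once I know that $\pi$ restricts to a computable effectively open surjection $C\to B$.

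The main obstacle is precisely this last point, since the coordinate projection of a pullback is in general not open onto its image. Surjectivity is immediate: $\pi(C)=\{q:\exists p,\ f(p)=\rho(q)\}=\{q:\rho(q)\in A\}=B$. For effective openness the key computation is
\begin{equation*}
\pi\big(C\cap([\sigma]\times[\tau])\big)=\{q\in[\tau]:\rho(q)\in f([\sigma])\}=[\tau]\cap\rho^{-1}(A\cap U_\sigma)=B\cap\big([\tau]\cap\rho^{-1}(U_\sigma)\big),
\end{equation*}
where $f([\sigma])=A\cap U_\sigma$ with $U_\sigma$ effectively open. The point is that replacing the image $f([\sigma])$ by the honest open set $U_\sigma$ eliminates the $p$-coordinate entirely, so the result is $B$ intersected with the effectively open set $[\tau]\cap\rho^{-1}(U_\sigma)$, uniformly in $(\sigma,\tau)$. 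This is what makes $\pi\colon C\to B$ effectively open, and it completes the reduction.
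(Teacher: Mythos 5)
Your proof is correct, and for the nontrivial direction it takes a genuinely different route from the paper's. The easy direction (composing a surjection onto $\rho^{-1}(A)$ with $\rho$) is the same in both. For (ii)$\Rightarrow$(i), the paper constructs a computable lift $g$ with $f\circ g=\rho$ by selecting, uniformly relative to $p$, a point in each fiber $f^{-1}(\rho(p))$; since these fibers are only $\Pi^0_2$ rather than closed ($\Po$ not being $T_1$), this selection needs the relativized, uniform version of Proposition~\ref{prop_comp_pi2}, imported from the literature, and $\rho^{-1}(A)$ is then expressed via the $\Pi^0_2$ condition ``$g(q)$ is defined and $f(g(q))=\rho(q)$''. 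You replace this selection argument by the pullback $C=\{(p,q):f(p)=\rho(q)\}$, whose $\Pi^0_2$-ness and computable overtness are direct verifications from $=_{\Po}$ being $\Pi^0_2$ together with the computability and effective openness of $f$; Lemma~\ref{lem_pi2_baire} is then used in both directions (once to parametrize $C$, once to pass from a surjection onto $B=\rho^{-1}(A)$ back to the overt $\Pi^0_2$ property), together with the observation---correctly identified as the crux and correctly verified---that the projection $C\to B$ is effectively open, because $\pi\bigl(C\cap([\sigma]\times[\tau])\bigr)=B\cap[\tau]\cap\rho^{-1}(U_\sigma)$ trades the image $f([\sigma])$ for the honest open set $U_\sigma$. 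What each approach buys: yours is self-contained, using only Lemmas~\ref{lem_pi2} and~\ref{lem_pi2_baire}, so the only fiberwise selection occurs inside Lemma~\ref{lem_pi2_baire} where fibers are closed subsets of $\Baire$, at the modest cost of the standard identification $\Baire\times\Baire\cong\Baire$; the paper's argument is shorter granted Proposition~\ref{prop_comp_pi2} and yields the stronger byproduct that $\rho$ factors computably through $f$. One cosmetic slip: the canonical embedding of $\Po$ into itself is not literally the identity (it sends $x$ to $\{n: F_n\subseteq x\}$, where $F_n$ is the $n$-th finite set), but what you actually use---that Lemma~\ref{lem_pi2} holds for $\Po$ with the total representation $\rho$, so that $A$ is computably overt $\Pi^0_2$ in $\Po$ iff $\rho^{-1}(A)$ is so in $\Baire$---is exactly the convention the paper itself adopts, e.g.\ in Theorem~\ref{thm_char}.
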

\begin{proof}
If~$A$ is a computably overt~$\Pi^0_2$-set then so is~$\rho^{-1}(A)$, so there exists a computable effectively open onto function~$f:\Baire\to\rho^{-1}(A)$. The function~$\rho\circ f$ satisfies the required conditions.

Conversely, assume that~$f:\Baire\to A$ is a computable effectively open surjective function.
\begin{claim}
There exists a computable function~$g:\rho^{-1}(A)\to\Baire$ such that~$f\circ g=\rho$.
\end{claim}
\begin{proof}[of the claim]
Given~$p\in\rho^{-1}(A)$, let~$A_p=\{q\in\Baire:f(q)=\rho(p)\}$. The set~$A_p$ is a computably overt~$\Pi^0_2$-set relative to~$p$. Indeed, it is~$\Pi^0_2$ relative to~$p$ because equality is~$\Pi^0_2$ in~$\Po$. It is computably overt relative to~$p$ because a cylinder~$[\sigma]$ intersects~$A_p$ iff~$\rho(p)\in f([\sigma])$ which is a c.e.~relation in~$p$ as~$f$ is effectively open. As a result, by relativizing Proposition \ref{prop_comp_pi2}, one can compute an element in~$A_p$. Everything is uniform in~$p$, so there is a computable function~$g$ mapping each~$p\in\rho^{-1}(A)$ to an element of~$A_p$, hence~$f\circ g(p)=\rho(p)$.
\end{proof}

Now, one has~$q\in\rho^{-1}(A)$ iff~$g(q)$ is defined and~$f\circ g(q)=\rho(q)$. Both relations are~$\Pi^0_2$, so~$\rho^{-1}(A)\in\Pi^0_2(\Baire)$ hence~$A\in\Pi^0_2(\Po)$ by Lemma \ref{lem_pi2}. Moreover,~$A$ is computably overt because for each basic open set~$B$ of~$\Po$,~$B\cap A\neq\emptyset$ iff~$f^{-1}(B)\neq\emptyset$, which is a c.e.~relation. 
\end{proof}

\section{Effective quasi-Polish spaces}\label{sec_qp}

According to Theorem 23 of \cite{br}, the quasi-Polish spaces (defined originally as the countably based completely quasi-metrizable spaces) coincide with the continuous open images of the Baire space. Effectivizing this definition, we obtain the following candidate for a notion of effective quasi-Polish space.

\begin{definition}\label{def}
An effective topological space~$X$ is an \defin{effective quasi-Polish space} if~$X$ is the image of~$\Baire$ under a computable effectively open map, or~$X$ is empty.
\end{definition}

Of course this notion is preserved by computable homeomorphisms (bijections that are computable in both directions).
\begin{theorem}\label{thm_char}
Let~$X$ be an effective topological space with its standard representation~$\rho_X$. The following statements are equivalent:
\begin{enumerate}
\item $X$ is effective quasi-Polish,
\item The image of~$X$ under its canonical embedding in~$\Po$ is a computably overt~$\Pi^0_2$-subset of~$\Po$,
\item $\dom(\rho_X)$ is a computably overt~$\Pi^0_2$-subset of~$\Baire$.
\end{enumerate}
\end{theorem}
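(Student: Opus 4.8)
The plan is to establish the cycle of implications $(1)\Rightarrow(2)\Rightarrow(3)\Rightarrow(1)$, leaning heavily on the two equivalence lemmas already proved for $\Po$ and $\Baire$. The key observation is that the standard representation $\rho_X$ is just the restriction of the enumeration representation $\rho$ to $\rho^{-1}(X)$, where we identify $X$ with its canonical image in $\Po$; thus $\dom(\rho_X)=\rho^{-1}(X)$, and the set $X\subseteq\Po$ and the set $\dom(\rho_X)\subseteq\Baire$ are linked exactly by the correspondence studied in Lemma \ref{lem_pi2} and Lemma \ref{lem_pi2_po}. This makes the equivalence $(2)\Leftrightarrow(3)$ almost immediate: by Lemma \ref{lem_pi2}, $X$ is a computably overt $\Pi^0_2$-subset of $\Po$ if and only if $\rho^{-1}(X)=\dom(\rho_X)$ is a computably overt $\Pi^0_2$-subset of $\Baire$, which is precisely the content of $(3)$.

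For $(1)\Rightarrow(2)$, I would argue as follows. If $X$ is effective quasi-Polish then there is a computable effectively open surjection $h:\Baire\to X$. Composing with the canonical embedding $e:X\to\Po$ gives a computable map $e\circ h:\Baire\to e[X]=X\subseteq\Po$; since $e$ is a computable homeomorphism onto its image, $e\circ h$ is again effectively open (as a map onto $X\subseteq\Po$) and surjective onto $X$. Thus condition (ii) of Lemma \ref{lem_pi2_po} holds for $A=X$, and that lemma yields directly that $X$ is a computably overt $\Pi^0_2$-subset of $\Po$, i.e.~condition $(2)$.

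For the remaining direction $(3)\Rightarrow(1)$, suppose $\dom(\rho_X)$ is a computably overt $\Pi^0_2$-subset of $\Baire$. By Lemma \ref{lem_pi2_baire} there is a computable effectively open surjection $f:\Baire\to\dom(\rho_X)$. The natural candidate for the witnessing map in Definition \ref{def} is the composite $\rho_X\circ f:\Baire\to X$; since $\rho_X$ is itself a computable effectively open surjection onto $X$ and $f$ is a computable effectively open surjection onto $\dom(\rho_X)=\dom(\rho_X)$, the composite is computable, surjective, and I expect it to be effectively open. The main obstacle I anticipate is precisely verifying that effective openness is preserved under this composition. Openness of a composite of open maps is formal, but the \emph{effective} version requires that, given $\sigma$, one can compute an index for the open set $U_\sigma$ with $(\rho_X\circ f)([\sigma])=X\cap U_\sigma$; this entails combining the effective-openness data of $f$ (cylinders mapped to relatively open subsets of $\dom(\rho_X)$) with the effective-openness of $\rho_X$ (the fact that images of basic open sets under $\rho_X$ are uniformly $\Sigma^0_1(X)$-sets), and checking the bookkeeping goes through uniformly. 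Once that is confirmed, $\rho_X\circ f$ witnesses that $X$ is an effective quasi-Polish space, closing the cycle.
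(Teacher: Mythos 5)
Your proof is correct and follows essentially the same route as the paper: the paper gets $1\Leftrightarrow 2$ from Lemma \ref{lem_pi2_po} and $2\Leftrightarrow 3$ from Lemma \ref{lem_pi2} applied to $\Po$, and your cycle uses exactly these tools, with your $(3)\Rightarrow(1)$ step (Lemma \ref{lem_pi2_baire} plus composition with $\rho_X$) merely inlining the paper's own proof of the relevant direction of Lemma \ref{lem_pi2_po}. The effective-openness bookkeeping you flag does go through routinely: $f([\sigma])=\dom(\rho_X)\cap U_\sigma$ where $U_\sigma$ is a uniform union of cylinders $[\tau]$, so $(\rho_X\circ f)([\sigma])$ is the corresponding uniform union of the sets $\rho_X([\tau])$, each of which is uniformly $\Sigma^0_1(X)$ since $\rho_X$ is effectively open.
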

\begin{proof}
The equivalence~$1. \iff 2.$ is the content of Lemma \ref{lem_pi2_po}. The equivalence~$2. \iff 3.$ is the content of Lemma \ref{lem_pi2} for the space~$\Po$.
\end{proof}

We also formulate the effective version of Theorem 21 of \cite{br}.

\begin{theorem}\label{thm_subspace}
Let~$X$ be an effective quasi-Polish space. A subspace~$Y\subseteq X$ is an effective quasi-Polish space iff~$Y$ is a computably overt~$\Pi^0_2$-subset of~$X$.
\end{theorem}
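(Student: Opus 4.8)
The plan is to reduce the statement to the representation-level characterization Theorem \ref{thm_char} together with the transfer Lemma \ref{lem_pi2}, rather than to argue topologically inside~$X$. The first step is to pin down the standard representation of the subspace~$Y$. Equipping~$Y$ with the inherited numbering~$B^Y_i = B^X_i \cap Y$ makes~$Y$ an effective topological space (the intersection witness~$f$ for~$X$ works verbatim, since $B^Y_i\cap B^Y_j=(B^X_i\cap B^X_j)\cap Y$), its canonical embedding is the restriction~$e\res{Y}$ of the embedding of~$X$, and consequently~$\dom(\rho_Y) = \rho_X^{-1}(Y)$ with~$\rho_Y = \rho_X\res{\dom(\rho_Y)}$. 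I would record this as a short preliminary computation, as it is what lets Lemma \ref{lem_pi2} be applied to~$Y$ through~$X$.

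With this identification in hand, apply Lemma \ref{lem_pi2} to the space~$X$ and the set~$A=Y$: then~$Y$ is computably overt in~$X$ iff~$\rho_X^{-1}(Y) = \dom(\rho_Y)$ is computably overt as a subset of~$\Baire$, and~$Y \in \Pi^0_2(X)$ iff~$\dom(\rho_Y) \in \Pi^0_2(\dom(\rho_X))$. On the other side, Theorem \ref{thm_char}(3) says that~$Y$ is effective quasi-Polish iff~$\dom(\rho_Y)$ is a computably overt~$\Pi^0_2$-subset of~$\Baire$, while~$X$ being effective quasi-Polish gives, again by Theorem \ref{thm_char}(3), that~$\dom(\rho_X)$ is a computably overt~$\Pi^0_2$-subset of~$\Baire$. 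Thus the whole theorem collapses to a single bridging fact: for~$C = \dom(\rho_Y) \subseteq D = \dom(\rho_X) \subseteq \Baire$ with~$D \in \Pi^0_2(\Baire)$, one has~$C \in \Pi^0_2(D)$ iff~$C \in \Pi^0_2(\Baire)$, the two notions of computable overtness coinciding outright.

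The overtness half is immediate: since overtness is witnessed by the cylinders~$[\sigma]$ meeting the set and~$C \subseteq D$, the condition ``$[\sigma] \cap C \neq \emptyset$'' is literally the same whether~$C$ is viewed inside~$D$ or inside~$\Baire$; moreover Lemma \ref{lem_pi2} already phrases overtness of~$\rho_X^{-1}(Y)$ at the Baire level, so nothing further is needed. The~$\Pi^0_2$ half is where the hypothesis on~$X$ is used, and is the only real step. For one direction, if~$C \in \Pi^0_2(\Baire)$ then its trace on~$D$ lies in~$\Pi^0_2(D)$, and since~$C \subseteq D$ this trace is~$C$ itself. For the converse,~$C \in \Pi^0_2(D)$ means~$C = D \cap P$ for some~$P \in \Pi^0_2(\Baire)$, and intersecting with~$D \in \Pi^0_2(\Baire)$ keeps~$C$ in~$\Pi^0_2(\Baire)$ because~$\Pi^0_2$ is effectively closed under finite intersection. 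I expect this closure-and-trace bookkeeping, together with the observation that ``$X$ effective quasi-Polish'' is exactly what promotes~$\dom(\rho_X)$ to a genuine~$\Pi^0_2$-subset of all of~$\Baire$, to be the crux; everything else is the transfer already packaged in Lemma \ref{lem_pi2} and the characterization Theorem \ref{thm_char}.
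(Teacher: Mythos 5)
Your proof is correct, and it is structurally parallel to the paper's, but it runs one level lower. The paper uses characterization 2 of Theorem \ref{thm_char} and argues entirely inside~$\Po$: from~$Y\in\Pi^0_2(X)$ and~$X\in\Pi^0_2(\Po)$ it gets~$Y\in\Pi^0_2(\Po)$ (your trace-plus-intersection argument, verbatim), and for overtness it observes that a basic open~$B$ of~$\Po$ meets~$Y$ iff the basic open~$B\cap X$ of~$X$ does, which is c.e. You instead use characterization 3, descend to~$\dom(\rho_Y)\subseteq\dom(\rho_X)\subseteq\Baire$, and invoke Lemma \ref{lem_pi2} as the transfer device; the crux --- effective closure of~$\Pi^0_2$ under finite intersection across a~$\Pi^0_2$ ambient set, and absoluteness of the overtness condition --- is identical in both. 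What your route costs is the extra pass through Lemma \ref{lem_pi2}; what it buys is that your preliminary bookkeeping (the subspace~$Y$ with basis~$B^X_i\cap Y$ is an effective topological space, its canonical embedding is~$e\res{Y}$, hence~$\rho_Y=\rho_X\res{\rho_X^{-1}(Y)}$) makes explicit a fact the paper also relies on but compresses into the phrase ``via the canonical embedding, we have~$Y\subseteq X\subseteq\Po$''. Also note, as a point in your favor, that your argument isolates exactly where the hypothesis that~$X$ is effective quasi-Polish is used: only in the direction~$C\in\Pi^0_2(D)\Rightarrow C\in\Pi^0_2(\Baire)$, matching the paper's use of~$X\in\Pi^0_2(\Po)$ in the corresponding direction.
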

\begin{proof}
Via the canonical embedding, we have~$Y\subseteq X\subseteq\Po$. We start by assuming that~$Y$ is a computably overt~$\Pi^0_2$-subset of~$X$. As~$Y\in\Pi^0_2(X)$ and~$X\in\Pi^0_2(\Po)$, one has~$Y\in\Pi^0_2(\Po)$. To show that~$Y$ is computably overt in~$\Po$, simply observe that for a basic open set~$B$ of~$\Po$,~$B$ intersects~$Y$ iff~$B^X:=B\cap X$ intersects~$Y$. It is a c.e.~relation~as~$Y$ is computably overt in~$X$.

If~$Y$ is effective quasi-Polish then it is a computably overt~$\Pi^0_2$-subset of~$\Po$ so it is a computably overt~$\Pi^0_2$-subset of~$X$, which is a subspace of~$\Po$.
\end{proof}

Sometimes it is easier to work with a computably admissible representation other than the standard representation.

\begin{theorem}\label{thm_adm_rep}
Let~$X$ be an effective topological space. If~$X$ admits a computably admissible representation whose domain is a computably overt~$\Pi^0_2$-subset of~$\Baire$, then~$X$ is an effective Polish space.
\end{theorem}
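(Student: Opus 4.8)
The plan is to deduce the conclusion from Theorem~\ref{thm_char} by checking its condition~3 for the standard representation, i.e.\ that $\dom(\rho_X)=\rho^{-1}(X)$ is a computably overt $\Pi^0_2$-subset of $\Baire$; if $X=\emptyset$ the claim holds by Definition~\ref{def}. Write $\delta$ for the given computably admissible representation, so that $\dom(\delta)$ is computably overt and $\Pi^0_2$. Throughout I would use two standing facts about the standard representation recalled in the Preliminaries: $\rho$ is a computable effectively open surjection onto $\Po$, and $\rho_X$ is itself a computable representation of $X$.

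For overtness, first apply Lemma~\ref{lem_pi2_baire} to $\dom(\delta)$ to obtain a computable effectively open surjection $h:\Baire\to\dom(\delta)$; then $\delta\circ h:\Baire\to X$ is a computable surjection onto $X\subseteq\Po$. Now a cylinder $[\sigma]$ meets $\dom(\rho_X)$ iff $\rho([\sigma])\cap X\neq\emptyset$ iff $(\delta\circ h)^{-1}(\rho([\sigma]))\neq\emptyset$. Since $\rho$ is effectively open, $\rho([\sigma])$ is effectively open in $\Po$ uniformly in $\sigma$, so its preimage under the computable map $\delta\circ h$ is effectively open in $\Baire$; and nonemptiness of an effectively open set is c.e. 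Hence $\dom(\rho_X)$ is computably overt.

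The $\Pi^0_2$ part is the crux, and is where admissibility is genuinely used. Because $\delta$ is computably admissible and $\rho_X$ is a computable representation of $X$, there is a computable $F$ with $\delta\circ F=\rho_X$ on $\dom(\rho_X)$. I would then observe that $q\in\dom(\rho_X)$ iff $F(q){\downarrow}\in\dom(\delta)$ and $\rho(q)=\delta(F(q))$: the left-to-right implication is immediate from $\delta\circ F=\rho_X$, while conversely these conditions force $\rho(q)=\delta(F(q))\in X$, i.e.\ $q\in\rho^{-1}(X)$. Each conjunct is $\Pi^0_2$: the domain of the partial computable function $F$ is $\Pi^0_2$; the preimage $F^{-1}(\dom(\delta))$ of the $\Pi^0_2$-set $\dom(\delta)$ is $\Pi^0_2$; and since equality in $\Po$ is $\Pi^0_2$ while $q\mapsto(\rho(q),\delta(F(q)))$ is computable, the last condition is $\Pi^0_2$ as well. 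Intersecting, $\dom(\rho_X)\in\Pi^0_2(\Baire)$.

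Combining the two parts, $\dom(\rho_X)$ is a computably overt $\Pi^0_2$-subset of $\Baire$, so $X$ is effective quasi-Polish by the implication $3\Rightarrow1$ of Theorem~\ref{thm_char}. The main obstacle to avoid is the temptation to finish directly via Lemma~\ref{lem_pi2_po} by pushing $h$ through $\delta$: computably admissible representations need not be effectively open, so $\delta\circ h$ need not be effectively open and Lemma~\ref{lem_pi2_po} does not apply. This is exactly why overtness (needing only a computable surjection) comes for free, whereas $\Pi^0_2$-ness must be extracted from the computable section $F$ furnished by admissibility, which converts the a priori $\Sigma^1_1$ statement ``$q$ names a point of $X$'' into a $\Pi^0_2$ one.
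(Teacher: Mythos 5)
Your proof is correct and follows essentially the same route as the paper's: reduce to condition~3 of Theorem~\ref{thm_char}, and use the computable reduction~$F$ with~$\delta\circ F=\rho_X$ furnished by admissibility to express~$\dom(\rho_X)=\rho^{-1}(X)$ by the~$\Pi^0_2$ condition ``$F(q)$ is defined, $F(q)\in\dom(\delta)$, and~$\rho(q)=\delta(F(q))$'' (the paper writes~$\delta(F(q))$ as~$\rho(G\circ F(q))$ via the second reduction~$G$, which is exactly how one justifies your implicit claim that~$\delta$ is computable into~$\Po$). The only divergence is in the overtness step, where you detour through Lemma~\ref{lem_pi2_baire} to build a total computable surjection~$h:\Baire\to\dom(\delta)$; this is valid but heavier than needed, since the paper gets it in one line: $[\sigma]\cap\dom(\rho_X)\neq\emptyset$ iff~$\delta^{-1}(\rho([\sigma]))\neq\emptyset$, and the latter is the nonemptiness of an effectively open subset of~$\dom(\delta)$, which is c.e.~directly from computable overtness of~$\dom(\delta)$, without invoking the~$\Pi^0_2$ hypothesis at that point.
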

\begin{proof}
Let~$\delta$ be a computably admissible representation of~$X$ such that~$\dom(\delta)$ is a computably overt~$\Pi^0_2$-subset of~$\Baire$. By definition of computably admissible,~$\delta$ is computably equivalent to~$\rho_X$, i.e.~there exist partial computable functions~$F,G:\subseteq\Baire\to\Baire$ satisfying~$\rho_X=\delta\circ F$ and~$\delta=\rho_X\circ G$. We show that~$\dom(\rho_X)$ is a computably overt~$\Pi^0_2$-set.

We recall that~$\rho_X$ is the restriction of the representation~$\rho$ of~$\Po$ to~$\rho^{-1}(X)$. We show that~$\dom(\rho_X)=\rho^{-1}(X)$ is a computably overt~$\Pi^0_2$-set. Let~$p\in\Baire$. One has~$p\in\dom(\rho_X)=\rho^{-1}(X)$ iff~$F(p)$ is defined,~$F(p)\in\dom(\delta)$ and~$\rho(G\circ F(p))=\rho(p)$. All these conditions are~$\Pi^0_2$, so~$\rho^{-1}(X)$ belongs to~$\Pi^0_2(\Baire)$.

One has~$[\sigma]\cap\dom(\rho_X)\neq\emptyset$ iff~$\delta^{-1}(\rho([\sigma]))\neq\emptyset$ which is c.e.~in~$\sigma$ as~$\dom(\delta)$ is computably overt.
\end{proof}

\section{Effective quasi-metric spaces}\label{sec_qm}
We now propose two effective versions of quasi-metric spaces and compare them with the notion of effective quasi-Polish space. A quasi-metric on a set~$X$ is a function~$d:X\times X\to \R_{\geq 0}$ satisfying:
\begin{itemize}
\item $d(x,z)\leq d(x,y)+d(y,z)$,
\item $x=y$ iff~$d(x,y)=d(y,x)=0$.
\end{itemize}

The quasi-metric~$d$ induces a metric~$\hat{d}(x,y)=\max(d(x,y),d(y,x))$.
\begin{definition}
A \defin{computable quasi-metric space} is a triple~$(X,d,S)$ where $d$ is a quasi-metric on~$X$ and~$S=\{s_i\}_{i\in\N}$ is a~$\hat{d}$-dense sequence such that~$d(s_i,s_j)$ are uniformly computable.
\end{definition}

We recall that a real number~$x$ is \defin{right-c.e.}~if~$x=\inf_i q_i$ for some computable sequence of rationals~$(q_i)_{i\in\N}$.
\begin{definition}
A \defin{right-c.e.~quasi-metric space} is a triple~$(X,d,S)$ where~$d$ is a quasi-metric on~$X$ and~$S=\{s_i\}_{i\in\N}$ is a~$\hat{d}$-dense sequence such that~$d(s_i,s_j)$ are uniformly right-c.e.
\end{definition}


Every right-c.e.~quasi-metric space is an effective topological space with the basis of balls~$B(s,r)=\{x\in X:d(s,x)<r\}$ with~$s\in S$ and~$r$ positive rational. To see this, we consider formal inclusion between balls:~$B(s,q)\fc B(t,r)$ iff~$d(t,s)+q<r$. Formal inclusion is c.e.~and~$B_i\cap B_j=\bigcup_{k:B_k\fc B_i\text{ and }B_k\fc B_j}B_k$, so the axiom of effective topological spaces is satisfied. As a result, any such space has its standard representation~$\delta_S$. We define another representation.

\begin{definition}
The \defin{Cauchy representation}~$\delta_C$ is defined in the following way: a point~$x\in X$ is represented by any sequence~$s_n\in S$ such that~$d(s_n,s_{n+1})<2^{-n}$ and~$s_n$ converges to~$x$ in the metric~$\hat{d}$.
\end{definition}

\begin{theorem}
On a right-c.e.~quasi-metric space, the Cauchy representation is computably equivalent to the standard representation.
\end{theorem}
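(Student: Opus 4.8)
The plan is to establish the two reductions $\delta_C\le\delta_S$ and $\delta_S\le\delta_C$ separately, exploiting two facts about the data. First, a standard name of $x$ is exactly an enumeration of the pairs $(s,r)$ with $x\in B(s,r)$, i.e.\ with $d(s,x)<r$, so the standard representation delivers precisely the \emph{forward} distances $d(s,x)$ for $s\in S$ (as upper approximations). Second, the inter-point distances $d(s_i,s_j)$, and hence $\hat d(s_i,s_j)=\max(d(s_i,s_j),d(s_j,s_i))$, are uniformly right-c.e., so conditions of the form $d(s_i,s_j)<q$ are c.e.\ uniformly.

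For $\delta_C\le\delta_S$ I would start from a Cauchy name $(s_n)$. The bound $d(s_n,s_{n+1})<2^{-n}$ and the triangle inequality give $d(s_n,s_m)<2^{-n+1}$ for all $m>n$; since $s_m\to x$ in $\hat d$ we have $d(s_m,x)\to 0$, and letting $m\to\infty$ in $d(s_n,x)\le d(s_n,s_m)+d(s_m,x)$ yields $d(s_n,x)\le 2^{-n+1}$. I then claim that for a basic ball $B(s,r)$ one has $x\in B(s,r)$ iff there is some $n$ with $d(s,s_n)+2^{-n+1}<r$. The ``if'' direction is $d(s,x)\le d(s,s_n)+d(s_n,x)\le d(s,s_n)+2^{-n+1}$. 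For ``only if'', $\hat d$-convergence gives $d(x,s_n)\to 0$, hence $d(s,s_n)\le d(s,x)+d(x,s_n)$ has $\limsup_n d(s,s_n)\le d(s,x)<r$, so the inequality holds for large $n$. As $d(s,s_n)$ is right-c.e.\ uniformly, this condition is c.e., and enumerating the qualifying balls produces a standard name of $x$.

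The reverse reduction $\delta_S\le\delta_C$ is where the real work lies. From a standard name I want $t_n\in S$ with $d(t_n,t_{n+1})<2^{-n}$ and $t_n\to x$ in $\hat d$. The forward targets $d(t_n,x)<2^{-n}$ are semidecidable (they say $x\in B(t_n,2^{-n})$), and the forward jumps $d(t_n,t_{n+1})<2^{-n}$ are semidecidable from the right-c.e.\ $S$-distances; $\hat d$-density guarantees that a suitable next center always exists, so a forward-fast-Cauchy sequence with $d(t_n,x)\to 0$ is easy to produce. The hard part, and the main obstacle, is that $\hat d$-convergence \emph{also} demands the reverse distances $d(x,t_n)\to 0$, to which the standard name gives no direct access: the sets $\{y:d(y,s)<r\}$ are in general not open in the quasi-metric topology, so $d(x,s)<r$ is not semidecidable from the forward information. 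Consequently one cannot simply select centers that are $\hat d$-close to $x$, and a naive forward-fast-Cauchy sequence may $\hat d$-converge to a point strictly above $x$ in the specialization order.

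To overcome this I would force the $\hat d$-limit to be $x$ rather than a mere forward-limit, using the principle that the forward profile $(d(s,x))_{s\in S}$ determines $x$: if $d(s,x)=d(s,y)$ for all $s\in S$, then picking (by density) $s_k\to x$ and $s'_k\to y$ in $\hat d$ and applying the triangle inequality gives $d(x,y)=d(y,x)=0$, so $x=y$. Since $\hat d(t_n,x)<\varepsilon$ is equivalent to $\sup_{s}\lvert d(s,t_n)-d(s,x)\rvert$ being small (one bound is the triangle inequality, the reverse bound comes from letting $s\to x$ in $\hat d$), the strategy is to refine the centers so that, for each fixed $s$, the forward values $d(s,t_n)$ converge to $d(s,x)$, driving $\hat d(t_n,x)\to 0$ while keeping the sequence forward-fast-Cauchy. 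Equivalently, one may verify that $\delta_C$ is computably admissible and conclude $\delta_S=\rho_X\le\delta_C$ from maximality. In either formulation the delicate step is the indirect control of the reverse distances, i.e.\ ruling out an $\hat d$-limit that is a proper specialization of $x$; this is the step I expect to require the most care.
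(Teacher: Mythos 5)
Your reduction $\delta_C\le\delta_S$ is correct, and it is essentially the paper's argument (the paper routes it through an intermediate representation $\delta'_C$ with $d(s_n,x)<2^{-n}$, but the content is the same). The problem is the hard direction $\delta_S\le\delta_C$. You correctly isolate the obstacle --- a standard name gives no direct handle on the reverse distances $d(x,t_n)$ --- and you state the right goal (make $d(s,t_n)$ converge to $d(s,x)$), but you never exhibit a computable construction that achieves it, and you yourself flag it as the step ``requiring the most care.'' That step is the actual content of the theorem, so as it stands the proof is incomplete. Your proposed alternative, ``verify that $\delta_C$ is computably admissible and conclude from maximality,'' is circular in this setting: computable admissibility means precisely computable equivalence with the standard representation $\rho_X=\delta_S$, which is the statement to be proved.

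The missing mechanism is short, and it is what the paper does. When building $t_{n+1}$, search (by exhaustive enumeration of $S$) for a point satisfying not only $d(t_n,t_{n+1})<2^{-n}$ and $d(t_{n+1},x)<2^{-n-1}$ (the latter is semidecidable from the standard name, being $x\in B(t_{n+1},2^{-n-1})$), but also $t_{n+1}\in U_1\cap\dots\cap U_{n+1}$, where $U_1,U_2,\dots$ is the list of basic balls containing $x$ supplied by the standard name; membership of a point of $S$ in a basic ball is c.e.\ because $d$ is right-c.e.\ on $S$. Such a point exists: any $s'\in S$ with $\hat d(s',x)$ small enough lies in each $U_i$ (if $U_i=B(c_i,r_i)$ then $d(c_i,s')\le d(c_i,x)+d(x,s')<r_i$), satisfies $d(s',x)<2^{-n-1}$, and satisfies $d(t_n,s')\le d(t_n,x)+d(x,s')<2^{-n}$ using the inductive bound $d(t_n,x)<2^{-n}$; so the search terminates. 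This extra requirement is exactly what forces the reverse convergence: given $\epsilon>0$, density gives $s^*\in S$ with $\hat d(s^*,x)<\epsilon$, so $B(s^*,\epsilon)$ occurs as some $U_i$, and for all $n\ge i$ one has $d(s^*,t_n)<\epsilon$, hence $d(x,t_n)\le d(x,s^*)+d(s^*,t_n)<2\epsilon$. Note that this argument only needs pointwise control of $d(s,t_n)$ at a dense set of $s$; the uniform bound $\sup_s\lvert d(s,t_n)-d(s,x)\rvert$ in your stated equivalence is true but stronger than what is needed, and there is no apparent way to enforce it computably, whereas the pointwise version on enumerated balls is enforceable for free.
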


\begin{proof}
For the proof we will also consider a slightly different representation~$\delta'_C$ where~$x$ is represented by any sequence~$s_n$ such that~$d(s_n,x)<2^{-n}$ and~$s_n$ converges to~$x$ in~$\hat{d}$.

We prove the following computable reductions:~$\delta_C\leq \delta'_C\leq \delta_S\leq\delta_C$.

Proof of~$\delta_C\leq \delta'_C$. Assume we are given a~$\delta_C$-name of~$x$, which is essentially a sequence~$(s_n)_{n\in\N}$ such that~$d(s_n,s_{n+1})<2^{-n}$ and~$\lim_{n\to\infty}\hat{d}(s_n,x)=0$. One easily checks that the sequence~$(s_{n+1})_{n\in\N}$ is a~$\delta_C$-name for~$x$.

Proof of~$\delta'_C\leq\delta_S$. Assume we are given a~$\delta'_C$-name of~$x$, which is essentially a sequence~$s_n$ such that~$d(s_n,x)<2^{-n}$ and~$\lim_{n\to\infty}\hat{d}(s_n,x)=0$. We show that we can enumerate the basic balls containing~$x$. Indeed, we show that~$x\in B(s,r)$ if and only if there exists~$n$ such that~$d(s,s_n)<r-2^{-n}$. First assume that the latter inequality holds. By the triangle inequality,
\begin{equation*}
d(s,x)\leq d(s,s_n)+d(s_n,x)<r-2^{-n}+2^{-n}=r.
\end{equation*}
Conversely, if~$d(s,x)<r$ then as~$\hat{d}(s_n,x)$ converges to~$0$, for sufficiently large~$n$ one has~$d(x,s_n)+2^{-n}<r-d(s,x)$ so~$d(s,s_n)\leq d(s,x)+d(x,s_n)<r-2^{-n}$.

Proof of~$\delta_S\leq\delta_C$. Assume we are given an enumeration of the basic balls containing~$x$, call it~$U_1,U_2,U_3,\ldots$. We build a sequence~$(s_n)_{n\in\N}$ as follows.

We take~$s_0$ such that~$d(s_0,x)<1$, which we can find by looking for a ball of radius~$1$ containing~$x$. Once~$s_0,\ldots,s_n$ have been defined, we look for~$s_{n+1}$ satisfying:
\begin{itemize}
\item $s_{n+1}\in U_1\cap\ldots\cap U_{n+1}$,
\item $d(s_n,s_{n+1})<2^{-n}$,
\item $d(s_{n+1},x)<2^{-n-1}$.
\end{itemize}

Such a point must exist, as if~$x'$ is sufficiently~$\hat{d}$-close to~$x$, the first and third conditions are satisfied, and~$d(s_n,x')\leq d(s_n,x)+d(x,x')<2^{-n}+d(x,x')$ by induction hypothesis, so~$d(s_n,x')<2^{-n}$ if~$d(x,x')$ is sufficiently small. Such a point can be effectively found,~$d$ is right-c.e.~on~$S$.

The sequence~$(s_n)_{n\in\N}$ satisfies the conditions of being a~$\delta_C$-name of~$x$. Indeed,~$d(x,s_n)$ converge to~$0$, as for each rational~$\epsilon$ there exists~$s\in S$ such that~$\hat{d}(s,x)<\epsilon$, so the ball~$B(s,\epsilon)$ appears as some~$U_i$, so for~$n\geq i$,~$d(x,s_n)\leq d(x,s)+d(s,s_n)<2\epsilon$.
\end{proof}

We recall that a quasi-metric~$d$ is (Smyth-)complete if every Cauchy sequence converges in the metric~$\hat{d}$, and that a space is quasi-Polish iff it is completely quasi-metrizable \cite{br}. One direction of this equivalence admits an effective version.
\begin{theorem}
Every right-c.e.~quasi-metric space that is complete is an effective quasi-Polish space.
\end{theorem}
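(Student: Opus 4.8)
```latex
The plan is to use the characterization of effective quasi-Polish spaces
provided by Theorem \ref{thm_char}, namely that~$X$ is effective
quasi-Polish iff~$\dom(\rho_X)$ is a computably overt~$\Pi^0_2$-subset
of~$\Baire$. However, it is cleaner to argue via the Cauchy representation
and Theorem \ref{thm_adm_rep}: I would show that~$\delta_C$ is a computably
admissible representation of~$X$ whose domain is a computably
overt~$\Pi^0_2$-subset of~$\Baire$, and then invoke Theorem
\ref{thm_adm_rep} directly. Since the previous theorem already established
that~$\delta_C$ is computably equivalent to~$\rho_X$, and hence computably
admissible, the entire task reduces to analyzing~$\dom(\delta_C)$.

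So the two things to verify are that~$\dom(\delta_C)$ is~$\Pi^0_2$ and that
it is computably overt. For the overtness, I would observe that a
cylinder~$[\sigma]$ (coding a finite sequence of indices~$i_0,\ldots,i_n$
of points in~$S$) intersects~$\dom(\delta_C)$ precisely when the finite
pattern is consistent with being extendable to a Cauchy name, which should
reduce to finitely many conditions~$d(s_{i_k},s_{i_{k+1}})<2^{-k}$. Since~$d$
is right-c.e.~on~$S$, each such strict inequality is c.e., so the set of
cylinders meeting the domain is c.e.~once one checks that any consistent
finite prefix can indeed be completed to a genuine Cauchy name converging
in~$\hat{d}$. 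This completion step is where completeness enters: I would use
$\hat{d}$-density of~$S$ together with Smyth-completeness to argue that from
any admissible finite prefix one can always pick a next point satisfying the
$2^{-k}$ constraint, so that the branch does not die out and a limit exists.

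For the~$\Pi^0_2$ condition on~$\dom(\delta_C)$, I would write membership as
a conjunction of two parts. A sequence~$(s_n)$ is a~$\delta_C$-name of some
point iff (a) the explicit Cauchy condition~$d(s_n,s_{n+1})<2^{-n}$ holds for
all~$n$, and (b) the sequence actually converges in~$\hat{d}$. Condition (a)
is an intersection of c.e.~open conditions, hence~$\Pi^0_2$. Condition (b) is
where completeness does the real work: because the quasi-metric is complete,
the fast-Cauchy condition~(a) \emph{already guarantees} convergence in the
metric~$\hat{d}$, so (b) is automatically implied by (a) and contributes no
extra constraint. Thus~$\dom(\delta_C)$ is exactly the~$\Pi^0_2$-set cut out
by the conditions~$d(s_n,s_{n+1})<2^{-n}$.

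The main obstacle I anticipate is justifying overtness rigorously, i.e.
proving that every finite prefix satisfying the fast-Cauchy inequalities
genuinely extends to a full name of an actual point of~$X$. Without
completeness one could have a fast-Cauchy sequence with no limit, so a
cylinder might fail to meet~$\dom(\delta_C)$ even though it looks locally
consistent; the right-c.e.~data alone does not let us decide this. The fix is
that Smyth-completeness ensures such a limit always exists, so local
consistency is equivalent to global extendability, and the semi-decidable
inclusion tests on~$S$ suffice to enumerate the intersecting cylinders. Once
this is secured, Theorem \ref{thm_adm_rep} delivers the conclusion
immediately.
```
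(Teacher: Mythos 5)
Your proposal is correct and takes essentially the same approach as the paper: both identify $\dom(\delta_C)$ with the $\Pi^0_2$-set of fast-Cauchy sequences (using completeness to absorb the convergence condition), establish computable overtness by extendability of finite prefixes, and conclude via the computable equivalence $\delta_C\equiv\rho_X$ and Theorem \ref{thm_adm_rep}. The only minor difference is that the paper extends a finite prefix to an \emph{ultimately constant} sequence, which converges trivially, so completeness is not actually needed for the overtness step—only for showing that every fast-Cauchy name has a limit, i.e.\ that the domain is exactly the $\Pi^0_2$-set you describe.
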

\begin{proof}
The domain of the Cauchy representation is a computably overt~$\Pi^0_2$-set. Indeed, the relation~$\forall n,d(s_n,s_{n+1})<2^{-n}$ is~$\Pi^0_2$, and any finite sequence satisfying this condition can be extended (to an ultimately constant sequence, e.g.). As the Cauchy representation is computably equivalent to the standard representation, we can apply Theorem \ref{thm_adm_rep}.
\end{proof}

\begin{proposition}\label{prop_point_rce}
In a right-c.e.~quasi-metric space, the following conditions are equivalent for a point~$x$:
\begin{itemize}
\item $x$ is computable,
\item The numbers~$d(s,x)$ are right-c.e., uniformly in~$s\in\S$.
\end{itemize}
\end{proposition}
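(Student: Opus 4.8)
The plan is to unwind the definition of a computable point inside the effective topological space structure that the right-c.e.\ quasi-metric induces on~$X$, and to observe that it matches the second condition almost verbatim. Recall that the canonical basis is the family of balls~$B(s,q)$ with~$s\in S$ and~$q$ a positive rational, under a fixed numbering~$(B_i)_{i\in\N}$, and that~$x\in B(s,q)$ holds precisely when~$d(s,x)<q$. By the convention adopted here (identifying~$X$ with~$e[X]\subseteq\Po$, where the computable points of~$\Po$ are exactly the c.e.\ sets),~$x$ is computable iff~$e(x)=\{i:x\in B_i\}$ is c.e.; under the numbering this is exactly the assertion that~$\{(s,q)\in S\times\Q_{>0}:d(s,x)<q\}$ is c.e. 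So the whole argument reduces to comparing this set with the uniform right-c.e.\ data of the numbers~$d(s,x)$.

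For the forward direction I would argue as follows. If~$x$ is computable then~$\{(s,q):d(s,x)<q\}$ is c.e., so fixing~$s$ we can enumerate exactly those rationals~$q$ with~$q>d(s,x)$, uniformly in~$s$. Since a real is right-c.e.\ iff the set of rationals strictly above it is c.e., this says precisely that the~$d(s,x)$ are right-c.e.\ uniformly in~$s$.

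Conversely, if the numbers~$d(s,x)$ are right-c.e.\ uniformly in~$s$, then~$\{(s,q):d(s,x)<q\}$ is c.e. But, under the fixed numbering, this set is exactly the neighborhood filter~$e(x)=\{i:x\in B_i\}$ of~$x$; hence~$e(x)$ is c.e., which is the definition of~$x$ being computable.

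I do not expect a genuine obstacle here: the statement is essentially a reformulation of the definition of computability of a point for this particular basis. The only point demanding care is the asymmetry of the quasi-metric — one must check that it is the quantity~$d(s,x)$, and not~$d(x,s)$ or~$\hat d(s,x)$, that governs membership in the basic balls, which is exactly why the second condition is phrased for~$d(s,x)$. Should one instead wish to produce an explicit computable Cauchy name for~$x$ in the backward direction, the search construction from the proof of~$\delta_S\leq\delta_C$ applies verbatim, but it is not needed for the topological notion of computable point used here.
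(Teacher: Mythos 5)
Your proof is correct and follows essentially the same route as the paper's: both reduce the statement to the equivalence~$x\in B(s,r)\iff d(s,x)<r$, noting that enumerability of the membership relation in basic balls is the definition of~$x$ being computable, while enumerability of~$\{(s,q):d(s,x)<q\}$ is exactly uniform right-c.e.-ness of the numbers~$d(s,x)$. The paper states this in three lines; your version merely spells out the same identification in more detail, including the (correct) remark about the asymmetry~$d(s,x)$ versus~$d(x,s)$.
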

\begin{proof}
One has~$x\in B(s,r)\iff d(s,x)<r$. The first relation is c.e.~iff~$x$ is computable. The second relation is c.e.~iff~$d(s,x)$ is right-c.e.
\end{proof}
We recall that a real number~$x$ is left-c.e.~if~$-x$ is right-c.e.
\begin{proposition}\label{prop_comp_point_cp}
In a computable quasi-metric space, the following conditions are uniformly equivalent for a point~$x$:
\begin{itemize}
\item $x$ is computable,
\item The numbers~$d(s,x)$ are right-c.e., uniformly in~$s\in\S$,
\item The numbers~$d(s,x)$ and~$d(x,s)$ are right-c.e.~and left-c.e.~respectively, uniformly in~$s\in\S$.
\end{itemize}
\end{proposition}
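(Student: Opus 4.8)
The plan is to establish the three-way equivalence by invoking Proposition \ref{prop_point_rce} for the first two conditions and then proving the only substantive implication, namely that a uniform right-c.e.\ bound on $d(s,x)$ forces a uniform left-c.e.\ bound on $d(x,s)$.

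First I would observe that a computable quasi-metric space is in particular a right-c.e.\ quasi-metric space, so Proposition \ref{prop_point_rce} already gives the uniform equivalence of ``$x$ is computable'' with ``$d(s,x)$ is right-c.e.\ uniformly in $s \in S$''. Since the third condition explicitly contains the second as a conjunct, the implication from the third to the second is immediate. It therefore remains only to derive the third condition from the second: assuming $d(s,x)$ is uniformly right-c.e., I must show that $d(x,s)$ is uniformly left-c.e.

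The key step is to write $d(x,s)$ as a supremum of uniformly left-c.e.\ quantities, using the triangle inequality together with the $\hat d$-density of $S$. For any $t \in S$ the inequality $d(t,s) \le d(t,x) + d(x,s)$ yields
\begin{equation*}
d(x,s) \ge d(t,s) - d(t,x).
\end{equation*}
Here $d(t,s)$ is computable, which is exactly where I use that the space is computable rather than merely right-c.e., and $d(t,x)$ is right-c.e.\ by hypothesis, so each lower bound $d(t,s) - d(t,x)$ is left-c.e., uniformly in $(t,s)$. Taking $t \in S$ with $\hat d(t,x) \to 0$, one has $d(t,x) \to 0$, while the two inequalities $d(t,s) \le d(t,x) + d(x,s)$ and $d(x,s) \le d(x,t) + d(t,s)$ force $d(t,s) \to d(x,s)$; hence the lower bounds converge to $d(x,s)$, and by density
\begin{equation*}
d(x,s) = \sup_{t \in S} \bigl( d(t,s) - d(t,x) \bigr).
\end{equation*}
Consequently $q < d(x,s)$ if and only if $q < d(t,s) - d(t,x)$ for some $t \in S$, and this is a c.e.\ relation in $(q,t,s)$: rewriting it as $d(t,x) < d(t,s) - q$, with $d(t,s) - q$ computable and $d(t,x)$ right-c.e., the inequality is semidecidable. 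Thus $d(x,s)$ is left-c.e.\ uniformly in $s$, which together with the assumed right-c.e.\ bound on $d(s,x)$ gives the third condition.

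The main obstacle is precisely this left-c.e.\ half: it cannot be obtained for free from computability of $x$ in a merely right-c.e.\ space, and the argument genuinely depends on the full computability of $d$ on $S \times S$. Since every conversion described is uniform, the three conditions are uniformly equivalent.
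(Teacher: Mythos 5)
Your proof is correct, but it carries the substantive implication along a different route than the paper. The paper, like you, reduces everything to showing that $d(x,s)$ is uniformly left-c.e., but it proves this starting from the \emph{first} condition: it takes a computable $\delta_C$-name $(s_n)_{n\in\N}$ of the computable point $x$ (implicitly relying on the earlier theorem that the standard representation reduces to the Cauchy representation) and shows $d(x,s)=\sup_n \bigl(d(s_n,s)-2^{-n}\bigr)$, a supremum over $\N$ of computable terms with an explicit error $2^{-n}$ coming from the name. You instead prove the implication from the \emph{second} condition to the third, via the formula $d(x,s)=\sup_{t\in S}\bigl(d(t,s)-d(t,x)\bigr)$, a supremum over all of $S$ in which the error term is the right-c.e.\ quantity $d(t,x)$ itself rather than a modulus extracted from a name. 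Both arguments rest on the same pillars --- the triangle inequality, $\hat d$-density of $S$, and computability of $d$ on $S\times S$ (which both proofs correctly identify as the place where ``computable'' rather than merely ``right-c.e.''\ quasi-metric is essential) --- but your version is more self-contained: it never constructs a Cauchy name, so it does not need the equivalence of the Cauchy and standard representations, and it makes the logical structure fully explicit ($1\iff 2$ by Proposition \ref{prop_point_rce}, $3\Rightarrow 2$ trivially, $2\Rightarrow 3$ by the supremum formula). What the paper's version buys in exchange is brevity, given that the representation-equivalence machinery was already developed in the same section.
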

\begin{proof}
We only have to prove that for a computable point~$x$, the numbers~$d(x,s)$ are uniformly left-c.e. Let~$(s_n)_{n\in\N}$ be a computable~$\delta_C$-name of~$x$. We show that~$d(x,s)=\sup_n d(s_n,s)-2^{-n}$ which is left-c.e., uniformly in~$s$.

Indeed,~$d(x,s)\geq d(s_n,s)-d(s_n,x)$, and as~$d(x,s_n)\leq \hat{d}(x,s_n)$ converges to~$0$,~$d(x,s)\leq d(x,s_n)+d(s_n,s)$ is arbitrarily close to~$d(s_n,s)$.
\end{proof}

\section{Separation}
Classically, a space is quasi-Polish if and only if it is completely quasi-metrizable \cite{br}. However the proof is not constructive. We know that each right-c.e.~quasi-metric space that is complete is an effective quasi-Polish space, but that the converse fails. For this, we fully characterize the effective notions of quasi-Polish space in a restricted case.

Let~$[0,1]_<$ come with the quasi-metric~$d(x,y)=\max(0,x-y)$, with the rational points as~$\hat{d}$-dense sequence. It is a computable quasi-metric space that is complete. For~$\alpha\in (0,1)$, the subspace~$[\alpha,1]_<$ is an effective topological subspace of~$[0,1]_<$. We investigate when it is an effective quasi-Polish space, a computably quasi-metrizable space, and a right-c.e.~quasi-metrizable space.

\begin{proposition}\label{prop_alpha}
The space~$[\alpha,1]_<$ is an effective quasi-Polish space iff~$\alpha$ is left-c.e.~relative to the halting set.
\end{proposition}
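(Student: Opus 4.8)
The plan is to reduce, via Theorem~\ref{thm_subspace}, the statement to a purely arithmetical condition on $\alpha$. Since $[0,1]_<$ is a complete computable quasi-metric space it is effective quasi-Polish, so by Theorem~\ref{thm_subspace} the subspace $[\alpha,1]_<$ is effective quasi-Polish iff $[\alpha,1]$ is a computably overt $\Pi^0_2$-subset of $[0,1]_<$. First I would unwind the topology: from $d(s,x)=\max(0,s-x)$ one computes that the basic ball $B(s,r)$ equals $(s-r,1]\cap[0,1]$, so the open sets of $[0,1]_<$ are exactly the up-sets $(a,1]$ and the closed sets are the down-intervals $[0,a]$. A key preliminary observation is then that computable overtness of $[\alpha,1]$ is automatic and carries no information about $\alpha$: the top point $1$ lies in every nonempty basic ball, so $B(s,r)\cap[\alpha,1]\neq\emptyset$ iff $s-r<1$, a decidable condition on the index. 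Hence the entire content is the equivalence ``$[\alpha,1]\in\Pi^0_2([0,1]_<)$ iff $\alpha$ is left-c.e.\ relative to $\emptyset'$'', which I would attack through the complement $[0,\alpha)=[0,1]_<\setminus[\alpha,1]$, proving ``$[0,\alpha)\in\Sigma^0_2$ iff the left cut $\{q\in\Q:q<\alpha\}$ is $\Sigma^0_2$''.

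For the direction assuming $\alpha$ is left-c.e.\ relative to $\emptyset'$, the left cut is $\Sigma^0_2$, say $q<\alpha\iff\exists m\,\forall n\,S(q,m,n)$ with $S$ computable. I would write $[0,\alpha)=\bigcup_{q<\alpha}[0,q]$ (every point below $\alpha$ lies below some rational below $\alpha$) and note that each $[0,q]$ is closed, i.e.\ $\Pi^0_1$, uniformly in $q$. Absorbing the $\Sigma^0_2$ quantifier into the index, the family $\{x:x\le q\ \wedge\ \forall n\,S(q,m,n)\}$ is $\Pi^0_1$ uniformly in $(q,m)$, and its union over all $(q,m)$ is exactly $[0,\alpha)$; a uniform union of uniformly closed sets is $\Sigma^0_2$, whence $[\alpha,1]\in\Pi^0_2$.

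For the converse, assume $[\alpha,1]\in\Pi^0_2([0,1]_<)$, so $[0,\alpha)\in\Sigma^0_2$, say $[0,\alpha)=\bigcup_n(U_n\setminus V_n)$ with $U_n,V_n$ uniformly effective open. Evaluating at a rational $q$ (a computable point, since $q\in(s-r,1]$ is decidable, so its standard name is computable), membership $q\in U_n$ is $\Sigma^0_1$ and $q\notin V_n$ is $\Pi^0_1$, both uniformly in $(q,n)$; hence $q<\alpha\iff q\in[0,\alpha)\iff\exists n\,(q\in U_n\wedge q\notin V_n)$ is a $\Sigma^0_2$ predicate of $q$. Thus the left cut of $\alpha$ is $\Sigma^0_2$, i.e.\ $\alpha$ is left-c.e.\ relative to $\emptyset'$.

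The main obstacle is not a single hard step but getting the descriptive bookkeeping exactly right: one must correctly identify the closed sets of $[0,1]_<$ as the down-intervals $[0,q]$ and verify that evaluating a $\Sigma^0_2(X)$ predicate at a rational point stays $\Sigma^0_2$, since these are precisely the two facts that tie the topological complexity of $[\alpha,1]$ to the arithmetical complexity of the left cut of $\alpha$. I would also double-check that the canonical embedding of the subspace $[\alpha,1]_<$ coincides with the restriction of that of $[0,1]_<$ (immediate, as the basis and its numbering are inherited), so that Theorem~\ref{thm_subspace} genuinely applies.
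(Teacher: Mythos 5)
Your proof is correct, and its skeleton coincides with the paper's: both first observe that computable overtness of $[\alpha,1]$ in $[0,1]_<$ is automatic (every nonempty basic ball contains the top point $1$), then invoke Theorem~\ref{thm_subspace} to reduce the proposition to the equivalence between $[\alpha,1]\in\Pi^0_2([0,1]_<)$ and $\alpha$ being left-c.e.\ relative to the halting set. Where you genuinely differ is in how that equivalence is proved. The paper stays on the $\Pi^0_2$ side and asserts, rather tersely, that $[\alpha,1]\in\Pi^0_2([0,1]_<)$ iff $[\alpha,1]=\bigcap_i(r_i,1]$ for uniformly right-c.e.\ reals $r_i$, i.e.\ iff $\alpha=\sup_i r_i$, which is then identified with relativized left-c.e.-ness. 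You instead dualize: you pass to the complement $[0,\alpha)$ and tie its $\Sigma^0_2$-ness to the arithmetical ($\Sigma^0_2$) complexity of the left cut $\{q\in\Q: q<\alpha\}$, proving one direction by assembling $[0,\alpha)$ from down-intervals $[0,q]$ gated by the $\Pi^0_1$ matrix of the $\Sigma^0_2$ cut, and the other by evaluating a $\Sigma^0_2$ presentation $\bigcup_n(U_n\setminus V_n)$ at the uniformly computable rational points. The two routes are dual formulations of the same fact, but yours supplies rigor exactly where the paper is compressed: converting an arbitrary $\Pi^0_2$ presentation $\bigcap_n\bigl([0,u_n]\cup(v_n,1]\bigr)$ into an intersection of rays with uniformly right-c.e.\ endpoints is not immediate (a direct extraction of the $r_i$ runs into non-uniformity in deciding which terms actually constrain $\alpha$), and your evaluation-at-rationals step, followed by Post's theorem ($\Sigma^0_2=\Sigma^0_1(\emptyset')$), is the clean way to justify that passage. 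The small price is that your argument routes through the arithmetical hierarchy rather than staying in the language of right-c.e.\ reals; the paper's formulation, once fleshed out, keeps the witness objects (the rays $(r_i,1]$) closer to the topology of $[0,1]_<$.
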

\begin{proof}
Observe that the set~$\{q:(q,1]\cap A\neq\emptyset\}$ is always c.e. Therefore, $[\alpha,1]_<$ is an effective quasi-Polish space if and only if~$[\alpha,1]\in\Pi^0_2([0,1]_<)$ by Theorem \ref{thm_subspace} (the c.e.~conditions is always satisfied as observed above). This is equivalent to the existence of uniformly right-c.e.~numbers~$r_i$ such that~$[\alpha,1]=\bigcap_i (r_i,1]$, i.e.~$\alpha=\sup_i r_i$. This is equivalent to~$\alpha$ being left-c.e.~relative to the halting set.
\end{proof}

\begin{proposition}\label{prop_cpqm}
The space~$[\alpha,1]_<$ admits a computably equivalent computable quasi-metric structure if and only if~$\alpha$ is right-c.e.
\end{proposition}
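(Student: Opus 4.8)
The plan is to prove both directions by exploiting the very rigid structure of the quasi-metric on $[0,1]_<$, where $d(x,y)=\max(0,x-y)$, so that a ball $B(s,r)=\{x:s-x<r\}=(s-r,1]$ is always an upward ray. This means that the topology of $[\alpha,1]_<$ is completely determined by the left endpoint $\alpha$, and the Cauchy representation essentially records sequences of rationals approaching a point from above in the metric $\hat d(x,y)=|x-y|$. The key point is that a computable quasi-metric structure requires the distances $d(s_i,s_j)$ between the points of a chosen dense sequence $S=\{s_i\}$ to be \emph{uniformly computable} (two-sided), whereas the ambient structure on $[0,1]_<$ only gives right-c.e.\ (one-sided) information relative to the position of $\alpha$. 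So the proposition says that upgrading from right-c.e.\ to fully computable distances is possible exactly when $\alpha$ itself is right-c.e.

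For the easy direction, assume $\alpha$ is right-c.e. First I would exhibit an explicit computable quasi-metric structure on $[\alpha,1]_<$. A natural choice for the dense sequence is to take $s_i=\max(\alpha,q_i)$ where $(q_i)$ enumerates the rationals in $[0,1]$; since $\alpha$ is right-c.e., each $s_i$ is a right-c.e.\ real, and one computes $d(s_i,s_j)=\max(0,s_i-s_j)$. The subtlety is that $\max(0,s_i-s_j)$ must be shown to be \emph{computable}, not merely right-c.e.; this is where right-c.e.-ness of $\alpha$ is used, because the truncation at $\alpha$ on both arguments causes the dependence on $\alpha$ to cancel in the relevant regime, or can be made to cancel by choosing the dense points to lie in a dense computable subset of $[\alpha,1]$ whose mutual distances are honestly computable. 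I would then invoke Proposition~\ref{prop_comp_point_cp} to check that this structure is computably equivalent to the standard representation inherited as a subspace of $[0,1]_<$, by verifying that the identity map is computable in both directions between the two representations.

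For the converse, assume $[\alpha,1]_<$ carries a computably equivalent computable quasi-metric structure $(X,d',S')$ and deduce that $\alpha$ is right-c.e. The idea is that $\alpha$ is the minimum of the space $[\alpha,1]_<$ in the specialization order, equivalently the unique point contained in every nonempty basic open ray $(q,1]$ with $q<\alpha$; it is in fact a computable point of the space, being the $\hat d$-limit of the points of $S'$ taken in decreasing order. Then I would apply Proposition~\ref{prop_comp_point_cp} to this computable point $\alpha$: in a computable quasi-metric space, for a computable point the numbers $d'(\alpha,s)$ are left-c.e.\ and $d'(s,\alpha)$ are right-c.e.\ uniformly in $s\in S'$. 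Translating these distance bounds back through the computable equivalence to the standard quasi-metric $d(x,y)=\max(0,x-y)$ on the reals, the value $\alpha$ itself is recovered as $\alpha=\inf\{q: q \text{ rational}, q>\alpha\}$ witnessed by a computable decreasing sequence extracted from the left-c.e.\ distance data, yielding that $\alpha$ is right-c.e.

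The step I expect to be the main obstacle is the converse, specifically extracting an honest right-c.e.\ approximation of $\alpha$ from the abstract hypothesis of a computably equivalent computable quasi-metric. The danger is that the new dense sequence $S'$ and metric $d'$ are given only up to computable equivalence of representations, so I cannot directly read off $\alpha$ from $d'$; I must first locate $\alpha$ intrinsically (as the bottom point of the specialization order) and argue that it is a computable point, and only then bring Proposition~\ref{prop_comp_point_cp} to bear. Making precise that the computability of the point $\alpha$ together with the left-c.e.\ behavior of $d'(\alpha,\cdot)$ forces $\alpha$ to be right-c.e.\ \emph{as a real number} — as opposed to merely computable in the abstract metric — is the crux, and will require carefully tracking what the computable equivalence $F,G$ between the two representations transports.
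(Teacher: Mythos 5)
Your proposal has genuine gaps in both directions.

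\textbf{Forward direction.} Your primary construction $s_i=\max(\alpha,q_i)$ does not yield a \emph{computable} quasi-metric. Computing the distances explicitly, $d(s_i,s_j)=\max\bigl(0,\,q_i-\max(\alpha,q_j)\bigr)$; for instance with $q_i=1$, $q_j=0$ this equals $1-\alpha$, which for a right-c.e.\ $\alpha$ is only left-c.e., and is computable only if $\alpha$ itself is. So the claimed ``cancellation'' of the dependence on $\alpha$ does not occur. In fact $\max(\alpha,q_i)$ is exactly the dense sequence the paper uses for \emph{left}-c.e.\ $\alpha$ in Proposition~\ref{prop_alpha_ter}, where it yields a right-c.e.\ (not computable) structure. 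The correct construction --- your parenthetical fallback, and what the paper does --- is to take $S$ to be the rationals in $(\alpha,1]$: right-c.e.-ness of $\alpha$ makes this set c.e., hence enumerable as a computable sequence, it is $\hat d$-dense in $[\alpha,1]$, and the mutual distances are distances between rationals, hence honestly computable.

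\textbf{Backward direction.} The step ``$\alpha$ is in fact a computable point of the space'' is false in general, and the rest of your plan rests on it. Computable points are preserved by computable equivalence of representations, so $\alpha$ is a computable point of $([\alpha,1]_<,d',S')$ iff it is a computable point of the standard representation of $[\alpha,1]_<$, i.e.\ iff $\{q\in\Q:q<\alpha\}$ is c.e., i.e.\ iff $\alpha$ is left-c.e. But the proposition precisely concerns $\alpha$ that are right-c.e.\ and typically not left-c.e.; for such $\alpha$ a computable quasi-metric structure exists by the forward direction, yet $\alpha$ is not a computable point of it --- being a $\hat d$-limit of points of $S'$ does not produce a \emph{computable} Cauchy name. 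The paper's route avoids this: the computable points one should exploit are the $s_i\in S'$ themselves (each has the constant sequence as a computable Cauchy name). Each $s_i$ is left-c.e.\ because it is a computable point of $[\alpha,1]_<$; to see it is also right-c.e., the paper shows that $x\mapsto d'(x,s_i)$ is a non-decreasing left-c.e.\ function on $[\alpha,1]$ (via the uniform relative version of Proposition~\ref{prop_comp_point_cp}), extends it to a non-decreasing left-c.e.\ function $f$ on all of $[0,1]$, and observes that for rational $q$ one has $q>s_i\iff f(q)>0$, a c.e.\ condition. Hence the $s_i$ are uniformly computable reals, and $\alpha=\inf_i s_i$ (an identity forced by $\hat d$-density) is right-c.e.\ as an infimum of uniformly computable reals --- right-c.e.\ but in general not computable, which is exactly why no argument making $\alpha$ itself a computable point can succeed.
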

\begin{proof}
Assume first that~$\alpha$ is right-c.e. There is a computable enumeration~$S=\{q_i\}_{i\in\N}$ of the rational numbers in~$(\alpha,1]$. The quasi-metric~$d(x,y)=\max(0,x-y)$ is computable on~$S$.

Conversely, assume a computable quasi-metric~$d$ with an associate set~$S=\{s_i\}_{i\in\N}$. We now prove that the points~$s_i$ are uniformly computable real numbers, which implies that~$\alpha=\inf_ss_i$ is right-c.e. The function mapping a real number~$x\in[\alpha,1]$ to~$d(x,s_i)$ is left-c.e.~($x$ is given using the standard Cauchy representation). Indeed, from~$x$ one can compute a name of~$x$ in~$[0,1]_<$, from which one can compute a name of~$x$ in~$[\alpha,1]_<$ and we can apply the uniform relative version of Proposition \ref{prop_comp_point_cp}.

The left-c.e.~ function~$x\mapsto d(x,s_i)$ is non-decreasing. Indeed, for~$x\leq x'$, one has~$d(x,s_i)\leq d(x,x')+d(x',s_i)=d(x',s_i)$. Therefore, it can be extended to a left-c.e.~non-decreasing function~$f$ over~$[0,1]$. Indeed, if~$f_0:[0,1]\to\R$ is a left-c.e.~function such that~$f_0(x)=d(x,s_i)$ for~$x\in[\alpha,1]$, then~$f(x):=\inf\{f_0(x'):x'\in[x,1]\}$ is left-c.e.~non-decreasing and agrees with~$d(x,s_i)$ on~$[\alpha,1]$.

As a result, for~$q\in\Q$,~$q>s_i$ if and only if~$f(q)>0$ which is a c.e.~condition, so~$s_i$ is right-c.e. Of course,~$s_i$ is left-c.e.~as it is a computable point of 
$[\alpha,1]_<$.
\end{proof}

\begin{proposition}\label{prop_alpha_ter}
The space~$[\alpha,1]_<$ admits a computably equivalent right-c.e. quasi-metric structure if and only if~$\alpha$ is left-c.e.~or right-c.e.
\end{proposition}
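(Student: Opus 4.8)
The plan is to prove each direction by splitting on which side $\alpha$ is approximable, since the two cases call for genuinely different quasi-metrics. For ($\Leftarrow$) when $\alpha$ is right-c.e.\ nothing new is needed: by Proposition~\ref{prop_cpqm} the space already carries a computably equivalent \emph{computable} quasi-metric, and every computable quasi-metric is in particular right-c.e.

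The substantial case of ($\Leftarrow$) is $\alpha$ left-c.e. Here I would fix an increasing computable sequence $a_n\uparrow\alpha$ and an enumeration $(q_i)$ of $\Q\cap(0,1]$, and take as dense set the \emph{clamped} points $s_i:=\max(\alpha,q_i)\in[\alpha,1]$ (each a left-c.e.\ real, hence a computable point of $[\alpha,1]_<$) together with the inherited quasi-metric $d'(x,y)=\max(0,x-y)$. The first key step is the identity
\[
d'(s_i,s_j)=\max\bigl(0,\ q_i-\max(\alpha,q_j)\bigr),
\]
valid in every case (both sides vanish once $q_i<\alpha$, and coincide trivially when $q_i\ge\alpha$); since $\max(\alpha,q_j)$ is left-c.e., the right-hand side is right-c.e.\ uniformly, so $(d',S')$ is a genuine right-c.e.\ quasi-metric space whose quasi-metric topology is exactly the lower topology of $[\alpha,1]_<$. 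The delicate point is the computable equivalence of representations. The map from a quasi-metric standard name to a name of $[\alpha,1]_<$ is direct: from a ball $B(s_i,r)\ni x$ one reads off the rational $q_i-r\le s_i-r<x$, and these are cofinal below $x$. The reverse map cannot be produced directly at the level of standard names—certifying $\max(\alpha,q_i)-r<x$ would require an upper bound on $\alpha$—so I would route it through the Cauchy representation: given rationals enumerated below $x$, choose an increasing cofinal subsequence $q_{k_n}\uparrow x$ and output the indices of $s_{k_n}=\max(\alpha,q_{k_n})$. These increase in the specialization order, so each consecutive quasi-metric distance is $0<2^{-n}$, while $s_{k_n}\to x$ in $\hat d'$; this is a computable $\delta_C$-name. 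The theorem that $\delta_C$ is computably equivalent to the standard representation on any right-c.e.\ quasi-metric space then supplies the missing standard name, and composing the two maps gives the equivalence.

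For ($\Rightarrow$) I would start from a right-c.e.\ quasi-metric structure $(d',S')$, $S'=(s_i)$, computably equivalent to $[\alpha,1]_<$. Each $s_i$ is a computable point, hence a left-c.e.\ real, and $\hat d'$-density forces $\inf_i s_i=\alpha$; moreover right-c.e.\ distances make every predicate ``$d'(s_i,s_j)<r$'' c.e., so rate-controlled Cauchy chains can be searched for. The argument then branches: if $\alpha$ is itself a computable point—equivalently, if one can computably drive such a chain down to the bottom element—then $\alpha$ is left-c.e.; otherwise I would convert the one-sided distance data into a decreasing sequence of rational upper bounds for $\alpha=\inf_i s_i$, yielding that $\alpha$ is right-c.e.

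The main obstacle is precisely this last extraction, and it is the exact counterpart of Proposition~\ref{prop_cpqm}. There, \emph{two-sided} (computable) distances let one show each $s_i$ is a computable real and conclude $\alpha=\inf_i s_i$ is right-c.e.; here only the right-c.e.\ side of the distances is available, so the proof must instead exploit the failure of $\alpha$ to be left-c.e.\ (the impossibility of certifying that a sequence point lies within $2^{-n}$ of the bottom is exactly a left-c.e.\ fact about $\alpha$) in order to free up the upper approximation. Making this dichotomy effective—turning ``$\alpha$ is not a computable point'' into an actual right-c.e.\ approximation of the infimum from the Cauchy/formal-ball data—is the crux, and where I expect the real work to lie.
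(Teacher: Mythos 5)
Your ($\Leftarrow$) direction is essentially the paper's own argument and is correct: the right-c.e.\ case is immediate from Proposition~\ref{prop_cpqm}, and in the left-c.e.\ case you take the same dense set $s_i=\max(\alpha,q_i)$ with the inherited quasi-metric; your closed-form identity $d'(s_i,s_j)=\max(0,\,q_i-\max(\alpha,q_j))$ is a clean static way of seeing the right-c.e.-ness that the paper establishes by a dynamic case analysis (output $0$ if $q_i\leq q_j$, otherwise approximate from the right and switch to $0$ if $q_i<\alpha$ is ever witnessed), and your explicit verification of computable equivalence via the Cauchy representation fills in a step the paper leaves implicit. Your unproved assertion that $\hat d'$-density forces points of $S$ arbitrarily close to the bottom is also true (it follows from the triangle inequality: $d'(\cdot,\alpha)$ is monotone in its first argument for the specialization order, so a sequence $\hat d'$-converging to $\alpha$ cannot stay above any fixed $q>\alpha$); the paper uses the same fact without proof.

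The genuine gap is the ($\Rightarrow$) direction, and you have correctly located it yourself: your ``dichotomy'' is only a restatement of the goal (``$\alpha$ is a computable point of $[\alpha,1]_<$'' \emph{means} $\alpha$ is left-c.e.), and the intended extraction in the other branch cannot work as described. The points $s_i$ are computable points of $[\alpha,1]_<$, hence merely \emph{left}-c.e.\ reals; lower approximations to the $s_i$ can never be converted into rational \emph{upper} bounds on $\alpha=\inf_i s_i$, and the hypothesis ``$\alpha$ is not left-c.e.'' gives no algorithm by itself. The paper's mechanism is different and is the missing idea: run the (uniformly right-c.e.) distance algorithms $q\mapsto d'(s,q)$ on pseudo-names of \emph{all} rationals $q\in[0,1]$, including those below $\alpha$, which name no point of the space. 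First, a Claim computes from $s\in S$ and $\epsilon>0$ a rational $\delta_{s,\epsilon}>0$ with $B'(s,\delta_{s,\epsilon})\subseteq B(s,\epsilon)$ (possible because the identity is computable, so $d$-balls are effectively open in the $d'$-structure and $d'(t,s)$ is right-c.e.). Then $E=\{q\in\Q\cap[0,1]:\exists s,\epsilon,\ d'(s,q)<\delta_{s,\epsilon}\text{ and } d(s,q)>\epsilon\}$ is a c.e.\ set of rationals on which the algorithms are ``fooled''; it is disjoint from $[\alpha,1]$, so $\sup E$ is a left-c.e.\ real $\leq\alpha$. If $\alpha$ is not left-c.e., fix a rational $q_0$ with $\sup E<q_0<\alpha$ and form the c.e.\ set $F$ of pairs $(q,\epsilon)$ with $q\in\Q\cap[q_0,1]$ and $d'(s,q)<\delta_{s,\epsilon}$ for some $s$; the right-c.e.\ real $I=\inf\{q+\epsilon:(q,\epsilon)\in F\}$ then equals $\alpha$: one has $I\leq\alpha$ because every rational $q>\alpha$ lies above some $s\in S$ (so $d'(s,q)=0$), and $I\geq\alpha$ because any pair with $q+\epsilon<\alpha$ would force its witness to put $q$ into $E$, impossible above $q_0$. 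Thus ``not left-c.e.'' is used exactly once, to clear the fooling region $E$ out of the way so that $F$ yields honest upper bounds; without this device (computing with pseudo-names outside the space and bounding where they can mislead by a c.e.\ set), the forward direction does not go through.
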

\begin{proof}
If~$\alpha$ is right-c.e.~then there is a computable quasi-metric structure by Proposition \ref{prop_cpqm}. If~$\alpha$ is left-c.e.~then we can take~$S=\{s_i\}_{i\in\N}$ with~$s_i=\max(q_i,\alpha)$, where~$(q_i)_{i\in\N}$ is a computable enumeration of the rational numbers in~$[0,1]$. We can take the restriction of the quasi-metric~$d(x,y)=\max(0,x-y)$. It is right-c.e.~on~$S$. To approximate~$d(s_i,s_j)$ from the right, do the following: if~$q_i\leq q_j$ then output~$0$ (correct as~$s_i\leq s_j$ in that case). If~$q_i>q_j$ then start approximating~$d(q_i,s_j)$ from the right (possible as~$s_j$ is left-c.e.) and switching to~$0$ if we eventually see that~$q_i<\alpha$.

Conversely, assume a right-c.e.~metric structure~$(d',S)$. Given~$q\in\Q\cap[\alpha,1]$,~$d'(s_i,q)$ is uniformly right-c.e. Indeed, each such~$q$ is a computable point of the right-c.e.~quasi-metric space~$([\alpha,1]_<,d',S)$, so by Proposition \ref{prop_point_rce},~$d'(s_i,q)$ is right-c.e.

\begin{claim}
Given~$s\in S$,~$\epsilon>0$, one can compute~$\delta>0$ such that~$B'(s,\delta)\subseteq B(s,\epsilon)$.
\end{claim}
\begin{proof}[of the claim]
The identity from the quasi-metric space~$[\alpha,1]_<$ to the quasi-metric space is computable, so~$B(s,\epsilon)$ is effectively open in~$[\alpha,1]_<$, hence can be expressed as a union of~$d'$-balls. One can find one of them,~$B'(t,r)$, containing~$s$. One has~$d'(t,s)<r$ and~$d'(t,s)$ is right-c.e., so one can compute~$\delta>0$ such that~$d'(t,s)+\delta<r$. One has~$B'(s,\delta)\subseteq B'(t,r)\subseteq B(s,\epsilon)$.
\end{proof}

Let~$\delta_{s,\epsilon}$ be obtained from the previous Claim. Consider thet set~$E=\{q\in\Q\cap[0,1]:\exists s\in S,\epsilon>0, d'(s,q)<\delta_{s,\epsilon}\text{ and }d(s,q)>\epsilon\}$.It is a c.e.~set. It is disjoint from~$[\alpha,1]$: if~$q\in[\alpha,1]$ and~$q\in B'(s,\delta_{s,\epsilon})$ then~$q\in B(s,\epsilon)$. As a result,~$\sup E$ is left-c.e.~and~$\sup E\leq \alpha$. If~$\alpha$ is not left-c.e.~then~$\sup E<\alpha$. As a result, we can fix some rational number~$q_0$ between~$\sup E$ and~$\alpha$, and work with rationals above~$q_0$ only, so that they do not belong to~$E$.

Let~$F=\{(q,\epsilon):q\in\Q\cap [q_0,1],\epsilon>0,\exists s\in S,\text{ such that }d'(s,q)<\delta_{s,\epsilon}\}$. $F$ is c.e.~so~$I:=\inf\{q+\epsilon:(q,\epsilon)\in F\}$ is right-c.e. If~$q>\alpha$ then there must exist~$s\in S$ such that~$s\leq q$, i.e.~$d'(s,q)=0$, so~$(q,\epsilon)\in F$ for every~$\epsilon>0$. As a result,~$I\leq \alpha$. If~$\alpha$ is not right-c.e.~then~$I<\alpha$.

Take~$(q,\epsilon)\in F$ such that~$q+\epsilon<\alpha$. Let~$s\in S$ witness that~$(q,\epsilon)\in F$. One has~$d'(s,q)<\delta_{s,\epsilon}$ and~$d(s,q)\geq d(\alpha,q)>\epsilon$ so~$q\in E$, giving a contradiction.

Therefore,~$\alpha$ is left-c.e.~or right-c.e.
\end{proof}

\begin{corollary}
There exists an effective quasi-Polish space which cannot be presented as a right-c.e.~quasi-metric space.
\end{corollary}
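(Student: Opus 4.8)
The plan is to obtain the separating example inside the family $[\alpha,1]_<$ already studied, by choosing the parameter $\alpha$ so that Propositions \ref{prop_alpha} and \ref{prop_alpha_ter} give opposite verdicts. Concretely, I would take a real $\alpha\in(0,1)$ that is left-c.e.~relative to the halting set but is neither left-c.e.~nor right-c.e. For such an $\alpha$, Proposition \ref{prop_alpha} makes $[\alpha,1]_<$ an effective quasi-Polish space, whereas Proposition \ref{prop_alpha_ter} forbids any computably equivalent right-c.e.~quasi-metric structure, which is exactly what the corollary claims, once one observes that presenting $[\alpha,1]_<$ as a right-c.e.~quasi-metric space means equipping it with a right-c.e.~quasi-metric whose standard representation is computably equivalent to $\rho_{[\alpha,1]_<}$.

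The first step is to reduce the corollary to a purely computability-theoretic existence statement. A real is left-c.e.~relative to $\emptyset'$ precisely when its left cut $\{q\in\Q:q<\alpha\}$ is $\Sigma^0_2$. Now if $\alpha$ is $\Delta^0_2$ it has a computable rational approximation $\alpha=\lim_s r_s$, and then $q<\alpha$ is equivalent to the $\Sigma^0_2$ condition $\exists n\,\exists N\,\forall s\ge N\,(r_s>q+2^{-n})$; hence every $\Delta^0_2$ real is automatically left-c.e.~relative to the halting set. So it suffices to exhibit a $\Delta^0_2$ real in $(0,1)$ that is neither left-c.e.~nor right-c.e.

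Such reals are well known to exist, and this is the only genuine content of the argument. One may simply invoke the analysis of the arithmetical hierarchy of real numbers (for instance the classification due to Ambos-Spies, Weihrauch and Zheng), which produces even difference-c.e.~reals lying outside the union of the left-c.e.~and the right-c.e.~reals. Alternatively, I would construct $\alpha$ directly by a finite-injury, movable-marker argument: maintain a shrinking rational interval known to contain $\alpha$, and for every index $e$ meet the two requirements $W_e\neq\{q:q<\alpha\}$ and $W_e\neq\{q:q>\alpha\}$ by installing a rational witness $q$ in the current interval, keeping $\alpha$ on one side of $q$ by default, and switching $\alpha$ to the other side if $q$ is ever enumerated into $W_e$. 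Each requirement acts at most once after its last injury, the interval lengths tend to $0$, and the construction yields a computable approximation of $\alpha$; hence $\alpha$ is $\Delta^0_2$ while neither of its cuts is c.e.

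With such an $\alpha$ fixed in $(0,1)$, the space $[\alpha,1]_<$ is effective quasi-Polish by Proposition \ref{prop_alpha} and admits no computably equivalent right-c.e.~quasi-metric structure by Proposition \ref{prop_alpha_ter}, so it cannot be presented as a right-c.e.~quasi-metric space. The main obstacle is really just the existence of the $\Delta^0_2$ real avoiding both one-sided classes; the reduction to it and the final assembly are immediate from the three preceding propositions.
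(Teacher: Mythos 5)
Your proof is correct and follows essentially the same route as the paper: the paper's proof is exactly to take $\alpha$ left-c.e.\ relative to the halting set but neither left-c.e.\ nor right-c.e., and combine Propositions \ref{prop_alpha} and \ref{prop_alpha_ter}. The only addition is that you justify the existence of such an $\alpha$ (via $\Delta^0_2$ reals that avoid both one-sided classes), a standard fact the paper leaves implicit.
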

\begin{proof}
Take~$\alpha$ that is left-c.e.~relative to the halting set but neither left-c.e.~nor right-c.e., and apply Propositions \ref{prop_alpha} and \ref{prop_alpha_ter}.
\end{proof}

\section{Discussion and open questions}\label{comqp}

By a {\em computable Polish space} we mean  an effective topological space~$X$ induced  by a computable complete metric space $(X,d,S)$ \cite{wei00,mo09,gkp14}. Most of the popular Polish spaces are computable.

By a {\em computable $\omega$-continuous domain} \cite{aj} we mean a
pair $(X,b)$ where $X$ is an $\omega$-continuous domain and
$b:\N\to X$ is a numbering of a domain base in $X$ modulo which the
approximation relation $\ll$ is c.e. Any computable
$\omega$-continuous domain  $(X,b)$ has the induced effective base  $\beta$ where $\beta_n=\{x\mid b_n\ll
x\}$. Most of the popular  $\omega$-continuous domains are
computable.

By Theorem 1 in \cite{s15}, both the computable Polish spaces and computable $\omega$-continuous domains are computable effective images of the Baire space, hence they are effective quasi-Polish, hence the notion of effective quasi-Polish space introduced in this paper is a reasonable candidate for capturing the computable quasi-Polish spaces. By Theorem 4 in \cite{kk17} (which extends Theorem 4 in \cite{s15}), any effective quasi-Polish space satisfies the effective Suslin-Kleene theorem. By Theorem 5 in \cite{s15}, any effective quasi-Polish space satisfies the effective Hausdorff theorem.

It seems that our search, as well as the independent search in \cite{br1} resulted in natural and convincing candidates for capturing the computable quasi-Polish spaces. Nevertheless, many interesting closely related questions remain open. Since the class of quasi-Polish spaces admits at least ten seemingly different characterizations \cite{br}, the status of effective analogues of these characterizations deserves additional investigation. In particular, this concerns the characterization of quasi-Polish spaces as the subspaces of non-compact elements in ($\omega$-algebraic or $\omega$-continuous) domains.

\bibliographystyle{splncs04}
\bibliography{biblio}

\end{document}